\documentclass[12pt]{amsart}

\usepackage[colorlinks,
linkcolor=blue,
anchorcolor=blue,
citecolor=blue]{hyperref}
\usepackage{epsfig}
\usepackage{graphicx}
\usepackage{amssymb, amstext, amscd, amsmath}
\usepackage{amsthm, mathrsfs, amsfonts,dsfont}
\usepackage{fullpage}
\usepackage{txfonts}
\usepackage{fancybox}
\usepackage{color}
\usepackage{cite}
\usepackage{comment}
\usepackage{tikz-cd} 

\allowdisplaybreaks

\newtheorem{theorem}{Theorem}[section]
\newtheorem{lemma}{Lemma}[section]

\newtheorem{corollary}{Corollary}[section]

\theoremstyle{definition}

\newtheorem{remark}{Remark}[section]

\numberwithin{equation}{section}

\begin{document}
\title {Strict Monotonicity of Numerical Invariants for the Submodules $[(z-w)^k]$ in $H^2(\mathbb D^2)$}
\author{Yin Liu}
\address{School of Mathematics and Statistics, Nanyang Normal University,
Nanyang, Henan, 473061, P. R. China} 
\email{lylight@mail.bnu.edu.cn}

\author{Yufeng Lu}
\address{School of Mathematical Sciences, Dalian University of Technology,
Dalian, Liaoning, 116024, P. R. China}
\email{lyfdlut@dlut.edu.cn}

\author{Chao Zu*}
\address{School of Mathematical Sciences, Dalian University of Technology,
Dalian, Liaoning, 116024, P. R. China}
\email{zuchao@mail.dlut.edu.cn}

\subjclass[2020]{Primary 46E22; Secondary 47B32, 47A13}
\thanks{*Corresponding author.}
\keywords{Hardy module over the bidisk; homogeneous submodule; numerical invariant; core operator; Toeplitz matrix.}

\begin{abstract}
For $k\geq1$, let $M_k=[(z-w)^k]\subset H^2(\mathbb D^2)$. We first determine the banded Toeplitz
matrices associated with the homogeneous components of $M_k$, together
with explicit formulas for their determinants and the relevant
algebraic cofactors. These formulas lead to a complete description of
the spectrum of the core operator:
\[
\sigma(C_{M_k})
=
\{0,1\}
\cup
\left\{
\pm\frac{k}{n+k}:n\geq1
\right\}.
\]
In particular, the spectral data determine the parameter $k$.

The determinant and cofactor formulas further yield a unified
finite-sum representation for $\alpha_{n,j}^{(k)}
=\langle w^j\phi_n,z^j\psi_n\rangle$,
and hence for Yang's higher numerical invariants. We derive an adjacent
relation connecting $\alpha_{n,j}^{(k)}$ and
$\alpha_{n,j+1}^{(k)}$ by means of an explicit telescoping
certificate, and show that the corresponding finite-section
transformations are strict contractions. Combining these finite-dimensional estimates with the asymptotic behavior of
$\alpha_{n,j}^{(k)}$, we prove the strict monotonicity
\[
\Sigma_0(M_k)>
\Sigma_1(M_k)>
\Sigma_2(M_k)>
\cdots .
\]
The cases $k\geq3$ constitute the new part of the analysis, while the
previously known cases $k=1,2$ are recovered within the same framework.
Consequently, Yang's monotonicity conjecture holds in strict form for
the entire family $\{[(z-w)^k]:k\geq1\}$.
\end{abstract}



\maketitle

\section{Introduction}
\label{sec1}

Let $H^2(\mathbb D^2)$ be the Hardy space over the unit bidisk $\mathbb D^2 =\{(z,w)\in\mathbb C^2:|z|<1,\ |w|<1\}$,
viewed as a Hilbert module over $\mathbb C[z,w]$.
A closed subspace $M\subseteq H^2(\mathbb D^2)$ is called a submodule
if $zM\subseteq M$ and $wM\subseteq M$.
Although the invariant-subspace structure of the one-variable Hardy space
is completely described by Beurling's theorem \cite{Beu},
the corresponding problem over the bidisk is substantially more complicated.
Even principal submodules generated by elementary homogeneous polynomials
may exhibit nontrivial operator-theoretic and module-theoretic phenomena;
see, for example,
\cite{Che,Dou,Rud,Survey}.

From the operator-theoretic point of view, a submodule
$M\subset H^2(\mathbb D^2)$ naturally gives rise to the restriction
pair on $M$ and the compression pair on the quotient module
$H^2(\mathbb D^2)/M$. These operator pairs provide a basic framework
for studying the structure and unitary equivalence of submodules over
the bidisk; see
\cite{Dou2,Ya3,Ya2,Core}.

A central object in this framework is the core operator $C_M$.
It is a bounded self-adjoint operator whose unitary-equivalence class
provides operator-theoretic information about the submodule
\cite{Ya2005}. Through the two defect spaces $M\ominus zM$ and $M\ominus wM$, the core operator is closely related to Yang's numerical invariants
\cite{Fat}.

The Hilbert--Schmidt properties of submodules over the bidisk have
also been investigated for several important classes of finitely
generated and inner-function-related submodules; see, for example,
\cite{Luo,Zou,
Zu1,Zu2}.


More generally, if $\{\phi_n:n\geq0\}$ and $\{\psi_n:n\geq0\}$ are homogeneous orthonormal bases of $M\ominus zM$ and
$M\ominus wM$, respectively, Yang's higher numerical invariants are
given by
\[
\Sigma_j(M)
=
\sum_{n=0}^{\infty}
\left|
\left\langle w^j\phi_n,z^j\psi_n\right\rangle
\right|^2,
\qquad j\geq0;
\]
see \cite{Ya3}.
Yang conjectured that the sequence $\{\Sigma_j(M)\}_{j\geq0}$
is nonincreasing for every submodule of $H^2(\mathbb D^2)$.
The conjecture remains open in general, and explicit computations for
concrete homogeneous submodules therefore provide useful tests of this
operator-theoretic phenomenon
\cite{Fat,Liu,Liu2}.

In the present paper, we study the family of principal homogeneous submodules
\[
M_k=[(z-w)^k],\qquad k\geq1.
\]
The cases $k=1$ and $k=2$ were studied previously (see \cite{Ya3,Fat,Liu}), whereas the new
cases considered here are $k\geq3$. Nevertheless, the arguments developed
below apply uniformly to all $k\geq1$. This allows us not only to treat
all higher powers $(z-w)^k$, $k\geq3$, but also to recover the known
cases $k=1,2$ within the same framework.

The generator $(z-w)^k$ contains a repeated linear factor, and this
multiplicity produces a matrix structure quite different from that of
polynomials with distinct linear factors. In particular, the Toeplitz
matrices arising from the homogeneous wandering spaces are no longer
sparse residue-class matrices. Instead, they are real symmetric $k$-banded Toeplitz matrices whose diagonals are determined by the binomial coefficients of $|1-\zeta|^{2k}$.
This banded structure is the starting point of our analysis.

More precisely, we first determine the banded Toeplitz matrices
associated with the homogeneous components of $M_k$. We then obtain
closed formulas for their determinants and for the algebraic cofactors
in the first and last rows. These formulas have two consequences.

First, they allow us to determine the complete spectral data of the
core operator. In Theorem~\ref{thm:core-spectrum}, we prove that
\[
\sigma(C_{M_k})
=
\{0,1\}\cup
\left\{
\pm\frac{k}{n+k}:n\geq1
\right\}.
\]
In particular, the largest nontrivial positive eigenvalue is $\frac{k}{k+1}$,
so the core spectrum itself determines the parameter $k$.

Second, the determinant and cofactor formulas lead to a unified
finite-sum representation for
\[
\alpha_{n,j}^{(k)}
=
\langle w^j\phi_n,z^j\psi_n\rangle,
\qquad n\geq0,\quad j\geq1.
\]

A central feature of the present problem is the adjacent relation
established in Theorem~\ref{thm:adjacent-relation}:
\[
\begin{aligned}
&(n+k+j+2)\alpha_{n+1,j+1}^{(k)}=(n-j+k+1)\alpha_{n,j+1}^{(k)}
+(n+k+j)\alpha_{n,j}^{(k)}-(n-j+k+1)\alpha_{n+1,j}^{(k)}.
\end{aligned}
\]
We prove this identity directly from the finite-sum formula by means
of an explicit telescoping certificate. The adjacent relation can then
be written as a finite-dimensional matrix transformation between the
coefficient vectors corresponding to $j$ and $j+1$.

The main structural result, Theorem~\ref{thm:finite-section},
shows that these finite-dimensional transformations are strict
contractions. Consequently, for every
$j\geq1$ and every admissible $N$,
\[
\sum_{n=0}^{N}
\left|\alpha_{n,j}^{(k)}\right|^2
>
\sum_{n=0}^{N}
\left|\alpha_{n,j+1}^{(k)}\right|^2,
\qquad
N\geq\max\{0,j-k\}.
\]
Thus the monotonicity phenomenon is already present at the level of
finite homogeneous sections; it is not merely a consequence of
cancellation in the infinite series.

To preserve strictness when passing to the infinite sums, we also
determine the large-$n$ behavior
\[
\alpha_{n,j}^{(k)}
=
-\frac{k}{n+k}
+
\frac{kj}{(n+k)^2}
+
O(n^{-3}).
\]
It follows that
\[
\left|\alpha_{n,j}^{(k)}\right|^2
-
\left|\alpha_{n,j+1}^{(k)}\right|^2
=
\frac{2k^2}{(n+k)^3}
+
O(n^{-4}),
\]
so the termwise difference is eventually positive.

Our main theorem is the following.
\begin{theorem}
\label{thm:main-introduction}
Let $M_k=[(z-w)^k], k\geq1$. Then
\[
\Sigma_0(M_k)
=
k^2\sum_{m=k}^{\infty}\frac{1}{m^2},
\qquad
\Sigma_1(M_k)=\Sigma_0(M_k)-1,
\]
and
\[
\Sigma_0(M_k)>
\Sigma_1(M_k)>
\Sigma_2(M_k)>
\cdots .
\]
Consequently, Yang's monotonicity conjecture holds in the strict form
for the family $\{[(z-w)^k]:k\geq1\}$.
\end{theorem}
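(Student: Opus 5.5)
The plan is to assemble the main theorem from the results announced in the introduction: the core spectrum (Theorem~\ref{thm:core-spectrum}), the finite-sum representation of $\alpha_{n,j}^{(k)}$, the adjacent relation (Theorem~\ref{thm:adjacent-relation}), the finite-section contraction (Theorem~\ref{thm:finite-section}), and the asymptotic expansion of $\alpha_{n,j}^{(k)}$.

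\textbf{Formula for $\Sigma_0$.} By Yang's identification, $\Sigma_0(M)=\sum_n|\langle\phi_n,\psi_n\rangle|^2$ equals $\|C_{M}\|_{HS}^2$, the sum of squares of the eigenvalues of $C_M$ counted with multiplicity. Theorem~\ref{thm:core-spectrum} gives these eigenvalues: $1$ with multiplicity one (from the degree-$k$ component), $0$ from the degrees that do not contribute, and, for each degree $k+n$ with $n\geq1$, the pair $\pm k/(n+k)$ coming from the rank-two block. The nonzero eigenvalues in one homogeneous block come from $\phi_n,\psi_n$. I would check this multiplicity count against the determinant and cofactor formulas. The cleanest route is probably to note that $\alpha_{n,0}^{(k)}=\langle\phi_n,\psi_n\rangle=-k/(n+k)$ for $n\geq1$ and $\alpha_{0,0}=1$, so that
\[
\Sigma_0(M_k)=1+\sum_{n\ge1}\frac{k^2}{(n+k)^2}=k^2\sum_{m=k}^\infty\frac1{m^2},
\]
using $1=k^2/k^2$. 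The normalization $\alpha_{0,0}=1$ holds because $\phi_0=\psi_0=(z-w)^k/\|(z-w)^k\|$.

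\textbf{Formula for $\Sigma_1$.} From the finite-sum representation with $j=1$, I would compute $\alpha_{n,1}^{(k)}$ explicitly. Alternatively, I would use the adjacent relation at $j=0$ (if it extends to $j=0$) together with the known $\alpha_{n,0}$ to solve a first-order recursion in $n$. The expected outcome is that $\sum_n|\alpha_{n,1}|^2$ differs from $\Sigma_0$ by exactly $1$. This is consistent with the general identity $\Sigma_0-\Sigma_1=1$ that Yang proved for submodules with finite-dimensional, or suitably small, quotient data. Here I would either invoke that known identity for principal homogeneous submodules, or verify it by telescoping: $|\alpha_{n,0}|^2-|\alpha_{n,1}|^2=c_n-c_{n+1}$ with $c_0-\lim c_n=1$.

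\textbf{Strict monotonicity for $j\ge1$.} Theorem~\ref{thm:finite-section} gives, for each $j\ge1$ and all $N\ge N_0:=\max\{0,j-k\}$,
\[
S_N(j):=\sum_{n=0}^N|\alpha_{n,j}|^2>\sum_{n=0}^N|\alpha_{n,j+1}|^2=:S_N(j+1).
\]
The asymptotics show that $|\alpha_{n,j}|^2-|\alpha_{n,j+1}|^2=2k^2/(n+k)^3+O(n^{-4})$, which is positive for $n\ge n_1(j)$; in particular both series converge. Fix any $N\ge\max\{N_0,n_1\}$. Then
\[
\Sigma_j-\Sigma_{j+1}=\bigl(S_N(j)-S_N(j+1)\bigr)+\sum_{n>N}\bigl(|\alpha_{n,j}|^2-|\alpha_{n,j+1}|^2\bigr).
\]
The first term is strictly positive and the tail is nonnegative, so $\Sigma_j>\Sigma_{j+1}$. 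Combining this with $\Sigma_0>\Sigma_1$ yields the full chain, and hence Yang's conjecture in strict form for this family.

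\textbf{Main obstacle.} The delicate part is the $\Sigma_1=\Sigma_0-1$ identity, specifically handling the boundary indices $n<j$ (where $\alpha_{n,j}$ may involve small-degree components, or vanish) consistently with the finite-sum formula. I would first try the telescoping route via the $j=0$ case of the adjacent relation, checking small-$n$ terms by hand from the determinant formulas.
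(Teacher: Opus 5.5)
Your argument is correct and is essentially the paper's. You get $\Sigma_0$ from $|\langle\phi_n,\psi_n\rangle|=k/(n+k)$ and $\Sigma_1$ from the explicit value $\alpha_{n,1}^{(k)}=-k/(n+k+1)$. You then get $\Sigma_j>\Sigma_{j+1}$ for $j\geq1$ by fixing one $N\geq\max\{0,j-k\}$ beyond the point where the termwise differences $2k^2/(n+k)^3+O(n^{-4})$ are positive, exactly as in the proof of Theorem~\ref{thm:strict-monotonicity}.

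One claim should be removed: $\Sigma_0(M)$ is not $\|C_M\|_{\mathrm{H.S.}}^2$. Since $C_M^2\simeq T_M\oplus S_M$, one has $\|C_M\|_{\mathrm{H.S.}}^2=\Sigma_0(M)+\Sigma_1(M)$. Counting $1$ and $\pm k/(n+k)$ once each gives $1+2k^2\sum_{m\geq k+1}m^{-2}=2\Sigma_0(M_k)-1$, which contradicts the value of $\Sigma_0$ you then derive. Moreover, Theorem~\ref{thm:core-spectrum} does not supply the multiplicities of $\pm k/(n+k)$. The paper obtains their simplicity only afterwards, in Corollary~\ref{cor:core-multiplicity}, from the values of $\Sigma_0$ and $\Sigma_1$, so the eigenvalue route would be circular. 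The route that works is Lemma~\ref{lem:basic-inner-products} together with \eqref{eq:opposite-corner-cofactors}. This gives $\langle\phi_n,\psi_n\rangle=A^n_{0,n}/D_n=k/(n+k)$ for all $n\geq0$, with a plus sign rather than the minus you wrote; the sign is irrelevant after squaring.

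The obstacle you anticipate for $\Sigma_1$ does not arise. For $j=1$ the effective range in Lemma~\ref{lem:alpha-unified-sum} is $\max\{1,1-n\}\leq s\leq\min\{k,1\}$, i.e.\ $s=1$ for every $n\geq0$, including $n=0$. Hence $\alpha_{n,1}^{(k)}=-k/(n+k+1)$ uniformly, and $\Sigma_1(M_k)=k^2\sum_{m\geq k+1}m^{-2}=\Sigma_0(M_k)-1$ at once. Equivalently, Lemma~\ref{lem:basic-inner-products} gives $\langle w\phi_n,z\psi_n\rangle=-\langle\phi_{n+1},\psi_{n+1}\rangle$ for every homogeneous principal submodule, which is an index shift rather than a telescoping. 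You therefore need neither the adjacent relation at $j=0$, which the paper proves only for $j\geq1$, nor results about finite-dimensional quotients, which would not apply anyway since $M_k^\perp$ is infinite-dimensional.
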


The cases $k=1$ and $k=2$ are consistent with the previously known
results. The new content of Theorem~\ref{thm:main-introduction}
is the uniform treatment of all powers $k\geq3$.

The passage from $k=2$ to arbitrary $k$ is not a formal extension:
the associated Toeplitz matrices have increasing bandwidth, and the
proof of monotonicity requires a new finite-sum identity, an adjacent
recurrence, and a family of finite-section contraction arguments that
work uniformly for all $k$.


We emphasize that the mechanism behind the present results is specific
to the repeated-factor structure of $(z-w)^k$. For comparison,
\[
z^k-w^k
=
\prod_{\omega^k=1}(z-\omega w)
\]
has distinct linear factors and gives rise to a residue-class
decomposition of the associated Toeplitz matrices. In the present
setting, no such decomposition is available; instead, the matrices are
genuinely $k$-banded, and the strict decrease of the numerical
invariants is governed by the adjacent relation and the resulting
finite-section contractions.

The two families also exhibit different spectral behavior. For the
submodules $[z^k-w^k]$, the nonzero spectrum of the core operator is
independent of $k$ \cite{Liu2}, whereas for $M_k=[(z-w)^k]$, the core spectrum contains
\[
\pm\frac{k}{n+k},
\qquad n\geq1,
\]
and therefore determines $k$. Thus the multiplicity of the repeated
factor $z-w$ is already reflected in the spectral data of the core
operator.

The paper is organized as follows. In Section~\ref{sec2}, we introduce the
operator-theoretic notation and recall the basic results on homogeneous
principal submodules, core operators, defect spaces, and numerical
invariants that will be used later. In Section~\ref{sec3}, we determine the
banded Toeplitz matrices associated with $(z-w)^k$, compute their
determinants and the relevant algebraic cofactors, and use these
formulas to determine the complete spectrum of the core operator.
In Section~\ref{sec4}, we derive a unified expression for $\alpha_{n,j}^{(k)}$, establish the adjacent relation by an explicit
telescoping argument, and prove the finite-section contraction.
Finally, the asymptotic behavior of $\alpha_{n,j}^{(k)}$ is used to deduce the strict monotonicity of the complete numerical invariant
sequence.

\section{Preliminaries}
\label{sec2}
In this section, we fix the operator-theoretic notation and recall the
basic facts on homogeneous principal submodules, defect spaces, and
numerical invariants that will be used throughout the paper.

\subsection{Restriction pairs, compression pairs, and the core operator}

Let $T_z$ and $T_w$ denote multiplication by the coordinate functions
$z$ and $w$ on $H^2(\mathbb D^2)$, respectively. If
$M\subseteq H^2(\mathbb D^2)$ is a submodule, then the restriction pair
is defined by $R_z=T_z|_M,
\,\,R_w=T_w|_M$. Since $M$ is invariant under both coordinate multipliers,
$(R_z,R_w)$ is a pair of commuting isometries on $M$.

Identifying the quotient module $H^2(\mathbb D^2)/M$ with
$M^\perp=H^2(\mathbb D^2)\ominus M$, we also consider the compression pair
$S_z=P_{M^\perp}T_z|_{M^\perp},\,\,S_w=P_{M^\perp}T_w|_{M^\perp}$, which is a pair of commuting contractions. Thus the restriction pair and
the compression pair describe, respectively, the operator-theoretic
structures of the submodule and the corresponding quotient module;
see \cite{Dou2,Ya3,Survey}.

A central operator associated with $M$ is the core operator $C_M$.
It is a bounded self-adjoint operator and vanishes on $M^\perp=H^2(\mathbb D^2)\ominus M$; see \cite{Core,Fat}. Hence the nonzero spectral data of $C_M$ are
determined by its restriction to $M$. On $M$, the core operator admits
the representation
\[
C_M
=
I-R_zR_z^*-R_wR_w^*
+R_zR_wR_z^*R_w^*.
\]

Since $R_z$ and $R_w$ are isometries, the self-commutators
\[
P_z:=[R_z^*,R_z]
=
I-R_zR_z^*,
\qquad
P_w:=[R_w^*,R_w]
=
I-R_wR_w^*
\]
are the orthogonal projections onto the defect spaces $M\ominus zM$ and $M\ominus wM$ respectively.

For submodules generated by finitely many polynomials, the operators $T_M:=P_zP_wP_z$ and $S_M:=[R_z^*,R_w][R_w^*,R_z]$
are trace class, and the first two numerical invariants are defined by
\[
\Sigma_0(M)=\operatorname{tr}(T_M),
\qquad
\Sigma_1(M)=\operatorname{tr}(S_M);
\]
see \cite{Ya4,Ya3}. Moreover,
$C_M^2\simeq T_M\oplus S_M$,
where $\simeq$ denotes unitary equivalence. Consequently, for a
submodule generated by finitely many polynomials,
\[
\operatorname{tr}(C_M^2)
=
\Sigma_0(M)+\Sigma_1(M)<\infty.
\]
Since $C_M$ is self-adjoint, it follows that $C_M$ is
Hilbert--Schmidt and hence compact. In particular,
\[
\|C_M\|_{\mathrm{H.S.}}^2
=
\operatorname{tr}(C_M^2)
=
\Sigma_0(M)+\Sigma_1(M).
\]
These relations explain the operator-theoretic origin of the numerical
invariants and will also be used in the spectral analysis below; see
\cite{Ya2005,Ya1}.

\subsection{Homogeneous principal submodules and Toeplitz matrices}
Let
\[
p(z,w)
=
\sum_{r=0}^{d}c_rz^rw^{d-r},
\qquad c_r\in\mathbb R,
\]
be a nonzero homogeneous polynomial of degree $d$, and let $M=[p]$ denote the principal submodule generated by $p$.
The restriction to real coefficients is sufficient for the present
paper, since our generator is
\[
p_k(z,w)=(z-w)^k.
\]

For $m\geq0$, let $H_m
=\operatorname{span}
\{z^rw^{m-r}:0\leq r\leq m\}$
be the space of homogeneous polynomials of degree $m$. Then
\[
M
=
\bigoplus_{m=0}^{\infty}pH_m.
\]
Consequently,
\[
M\ominus zM
=
\mathbb Cp
\oplus
\bigoplus_{m=1}^{\infty}
\left(
pH_m\ominus pzH_{m-1}
\right),
\]
and similarly,
\[
M\ominus wM
=
\mathbb Cp
\oplus
\bigoplus_{m=1}^{\infty}
\left(
pH_m\ominus pwH_{m-1}
\right).
\]
This graded decomposition provides the starting point for the explicit
construction of orthonormal bases of the two defect spaces; see
\cite[Section~2]{Fat}.

For $n\geq0$, consider the homogeneous vectors
$pw^n,\;
pzw^{n-1},\;
\ldots,\;
pz^n$. Their Gram matrix is denoted by $A^n=(a_{i,j})_{i,j=0}^{n}$,
where $a_{i,j}=
\left\langle
pz^iw^{n-i},
pz^jw^{n-j}
\right\rangle$.

For $m\geq0$, set $\gamma_m
=\langle pw^m,pz^m\rangle$. Since $p$ has real coefficients, each $\gamma_m$ is real. We therefore extend the notation to negative indices by setting $\gamma_{-m}=\gamma_m, m\geq1$.

Moreover, $a_{i,j}=
\gamma_{i-j},0\leq i,j\leq n$. Therefore $A^n$ is a real symmetric Toeplitz matrix of the form
\[
A^n=
\begin{pmatrix}
\gamma_0
& \gamma_1
& \gamma_2
& \cdots
& \gamma_n
\\
\gamma_1
& \gamma_0
& \gamma_1
& \cdots
& \gamma_{n-1}
\\
\gamma_2
& \gamma_1
& \gamma_0
& \cdots
& \gamma_{n-2}
\\
\vdots
& \vdots
& \vdots
& \ddots
& \vdots
\\
\gamma_n
& \gamma_{n-1}
& \gamma_{n-2}
& \cdots
& \gamma_0
\end{pmatrix}.
\]

The vectors $pw^n, pzw^{n-1},\ldots, pz^n$ are linearly independent. 
Therefore $A^n$ is positive
definite and, in particular, invertible.

For $n\geq1$, we set $D_n=\det A^{n-1}, D_0=1$.
Thus, $\det A^n=D_{n+1}$.

For $0\leq i,j\leq n$, let $A^n_{i,j}$ denote the algebraic cofactor
of the entry in row $i$ and column $j$ of $A^n$. Thus
$A^n_{i,j}=
(-1)^{i+j}\det A^n(i|j)$, where $A^n(i|j)$ denotes the matrix obtained from $A^n$ by deleting row $i$ and column $j$. For $n=0$, we use the standard convention that the determinant of the
empty matrix is $1$, so that
$A^0_{0,0}=1$.

The adjugate identity gives
\begin{equation}
\label{eq:inverse-cofactor}
\bigl((A^n)^{-1}\bigr)_{i,j}
=
\frac{A^n_{j,i}}{D_{n+1}}.
\end{equation}

\subsection{Defect-space bases and numerical invariants}

The following construction of orthonormal bases for the two defect
spaces is essentially Proposition~2.1 of
\cite{Fat}.

\begin{lemma}
\label{lem:defect-bases}
Let $M=[p]$, where $p$ is a nonzero homogeneous polynomial with real
coefficients. Then $\phi_0=\psi_0=\frac{p}{\|p\|}$, and for every $n\geq1$, we have
\[
\phi_n
=
\frac{
\displaystyle
\sum_{j=0}^{n}
pA^n_{0,j}z^jw^{n-j}
}{
\sqrt{D_nD_{n+1}}
},\,\,\,\,\qquad
\psi_n
=
\frac{
\displaystyle
\sum_{j=0}^{n}
pA^n_{n,j}z^jw^{n-j}
}{
\sqrt{D_nD_{n+1}}
}.
\]
Moreover, $\{\phi_n:n\geq0\}
$ and $\{\psi_n:n\geq0\}$
are orthonormal bases for $M\ominus zM$ and $M\ominus wM$,
respectively.
\end{lemma}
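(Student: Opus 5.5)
The plan is to work inside the degree-$(n+d)$ piece $pH_n$ for each $n\geq1$. We identify the vector $f=\sum_{j=0}^n c_j\,pz^jw^{n-j}$ with its coefficient vector $c=(c_0,\ldots,c_n)^T$. In these coordinates the inner product on $pH_n$ is given by the Gram matrix $A^n$: $\langle f,g\rangle=\sum_{i,j}c_j\overline{d_i}\,a_{i,j}$, with the appropriate index convention for $g=\sum_i d_i\,pz^iw^{n-i}$. By the graded decompositions above, $M\ominus zM$ is the orthogonal sum of $\mathbb Cp$ and the spaces $pH_n\ominus pzH_{n-1}$. Since $\dim pH_n=n+1$ and $\dim pzH_{n-1}=n$, each such summand is one-dimensional, and similarly for $w$. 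So it suffices to produce, for each $n\geq1$, a unit vector in $pH_n$ orthogonal to $pzH_{n-1}=\operatorname{span}\{pz^iw^{n-i}:1\leq i\leq n\}$, and likewise for $pwH_{n-1}=\operatorname{span}\{pz^iw^{n-i}:0\leq i\leq n-1\}$. The case $n=0$ is immediate.

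\textbf{Orthogonality.} Let $\phi=\sum_j A^n_{0,j}\,pz^jw^{n-j}$. For $1\leq i\leq n$,
\[
\langle \phi, pz^iw^{n-i}\rangle=\sum_j A^n_{0,j}\,a_{j,i}.
\]
Using the symmetry $a_{j,i}=a_{i,j}$ (real coefficients), this equals $\sum_j a_{i,j}A^n_{0,j}$. That is the cofactor expansion along row $0$ of the matrix obtained from $A^n$ by replacing row $0$ with row $i$, which has two equal rows, so it vanishes for $i\neq0$. For $i=0$ the same expansion gives $\det A^n=D_{n+1}$. The same argument with row $n$ shows that $\psi=\sum_j A^n_{n,j}\,pz^jw^{n-j}$ is orthogonal to $pz^iw^{n-i}$ for $i\neq n$, and that $\langle\psi,pz^nw^0\rangle=D_{n+1}$.

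\textbf{Normalization.} Expand $\|\phi\|^2=\sum_{i}\overline{A^n_{0,i}}\,\langle\phi,pz^iw^{n-i}\rangle$. All cofactors are real, and only the $i=0$ term survives, so
\[
\|\phi\|^2=A^n_{0,0}\,D_{n+1}.
\]
Since $A^n$ is Toeplitz, deleting row $0$ and column $0$ leaves $A^{n-1}$, so $A^n_{0,0}=D_n$. Likewise, deleting row $n$ and column $n$ leaves $A^{n-1}$, so $A^n_{n,n}=D_n$ and $\|\psi\|^2=D_nD_{n+1}$. Both quantities are positive because the Gram matrices are positive definite. This gives the stated normalizations, and the lemma follows by assembling the one-dimensional summands.

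The only point needing care is bookkeeping of the inner-product convention, that is, which index is conjugated and whether one gets the row or the column cofactor expansion. Symmetry of $A^n$ makes this harmless. There is no real obstacle here; the argument is essentially Proposition~2.1 of \cite{Fat}.
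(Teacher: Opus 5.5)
Your proof is correct: the alien-cofactor identity $\sum_j a_{i,j}A^n_{0,j}=\delta_{i,0}D_{n+1}$ and its row-$n$ analogue give the required orthogonality, and the Toeplitz identity $A^n_{0,0}=A^n_{n,n}=\det A^{n-1}=D_n$ gives the normalization $\|\phi\|^2=\|\psi\|^2=D_nD_{n+1}$. The paper gives no independent proof and cites Proposition~2.1 of \cite{Fat}, which is this standard cofactor argument, so your route matches the one the paper relies on.
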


The preceding bases connect the Toeplitz matrices directly with Yang's
numerical invariants \cite{Ya3,Fat}. 

\begin{lemma}
\label{lem:numerical-invariants}
Let $M$ be a homogeneous submodule and let
$\{\phi_n:n\geq0\}$ and $\{\psi_n:n\geq0\}$ be homogeneous
orthonormal bases of $M\ominus zM$ and $M\ominus wM$, respectively.
Then
\begin{equation*}
\Sigma_0(M)
=
\sum_{n=0}^{\infty}
|\langle\phi_n,\psi_n\rangle|^2,
\end{equation*}
and for $j\geq1$,
\begin{equation*}
\label{eq:Sigmaj-basis}
\Sigma_j(M)
=
\sum_{n=0}^{\infty}
\left|
\left\langle
w^j\phi_n,z^j\psi_n
\right\rangle
\right|^2.
\end{equation*}
\end{lemma}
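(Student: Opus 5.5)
The proof reduces each invariant to a Hilbert--Schmidt norm written in the two defect bases, and then uses homogeneity to remove the off-diagonal terms. Two structural facts come first. By the graded decomposition of $M\ominus zM$ and $M\ominus wM$ recalled above, the degree-$(d+n)$ component of each defect space is $pH_n\ominus pzH_{n-1}$ (respectively $pH_n\ominus pwH_{n-1}$). This space has dimension $(n+1)-n=1$. Hence a homogeneous orthonormal basis $\{\phi_n\}$ of $M\ominus zM$ is unique up to unimodular scalars, and $\phi_n$ has degree $d+n$; the same holds for $\{\psi_n\}$. Second, homogeneous polynomials of different total degree are orthogonal in $H^2(\mathbb D^2)$, since the monomials $z^aw^b$ are orthogonal. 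In particular, $\langle w^i\phi_n,z^i\psi_m\rangle=0$ whenever $n\neq m$. Every diagonal term is unchanged when basis vectors are multiplied by unimodular scalars, so the stated sums do not depend on the choice of homogeneous bases.

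For $\Sigma_0$, I will compute $\operatorname{tr}(T_M)=\operatorname{tr}(P_zP_wP_z)=\sum_n\|P_w\phi_n\|^2$ using the basis $\{\phi_n\}$ of $\operatorname{ran}P_z$. Expanding $P_w\phi_n$ in the basis $\{\psi_m\}$ gives $\sum_{n,m}|\langle\phi_n,\psi_m\rangle|^2$. The orthogonality remark kills every term with $m\neq n$, which leaves $\sum_n|\langle\phi_n,\psi_n\rangle|^2$.

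For $j\geq1$ the displayed formula is Yang's definition of $\Sigma_j$ for homogeneous bases, so what remains is the well-definedness shown above. I will also check consistency with the trace definition $\Sigma_1=\operatorname{tr}(S_M)$ given earlier. For $f=zg+h$ with $h\in M\ominus zM$ one finds $R_z^*R_wf=wg+R_z^*(wh)$ and $R_wR_z^*f=wg$. Hence $[R_z^*,R_w]=R_z^*R_wP_z$, and symmetrically $[R_w^*,R_z]=R_w^*R_zP_w$. These two operators are adjoint to each other up to the commutator sign and the projection bookkeeping. Consequently $S_M=B^*B$ for an operator of the form $B=P_wR_w^*R_zP_z$ or its analogue. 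This gives
\[
\operatorname{tr}(S_M)=\|B\|_{\mathrm{H.S.}}^2=\sum_{n,m}|\langle R_w\phi_n,R_z\psi_m\rangle|^2=\sum_{n,m}|\langle w\phi_n,z\psi_m\rangle|^2 .
\]
Homogeneity again reduces this to the diagonal $n=m$.

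The main obstacle is this last step. One has to track exactly which projections appear in $S_M$ and confirm that the resulting Hilbert--Schmidt operator pairs $M\ominus zM$ with $M\ominus wM$ through $R_w$ and $R_z$, rather than through their adjoints. I would settle this by testing the identity on the simplest homogeneous vectors before writing the general computation.
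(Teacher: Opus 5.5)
The paper states this lemma without proof, quoting Yang's work and the earlier paper on homogeneous submodules, so your argument has to stand on its own. Your route is the right one: write each invariant as a Hilbert--Schmidt norm in the two defect bases, then use orthogonality of different degrees to remove the off-diagonal terms. The $\Sigma_0$ part is complete.

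\textbf{The $\Sigma_1$ step.} This step is not finished, and the operator you name is wrong. With $B=P_wR_w^*R_zP_z$ the projections sit on the wrong sides, and one gets $\|B\|_{\mathrm{H.S.}}^2=\sum_{n,m}|\langle z\phi_n,w\psi_m\rangle|^2$ rather than the sum you display. In general $B^*B\neq S_M$. For example, take $M=[z]$, so $\phi_n=zw^n$ and $\psi_n=z^{n+1}$. Then $S_M=0$, because $R_z^*$ and $R_w$ commute on $zH^2(\mathbb D^2)$, yet $|\langle z\phi_1,w\psi_1\rangle|=|\langle z^2w,z^2w\rangle|=1$. There is also no sign to track: $[R_z^*,R_w]^*=[R_w^*,R_z]$ exactly.

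The fix uses only the two identities you already proved. Put $X=[R_z^*,R_w]$. Then $X=R_z^*R_wP_z$. Taking adjoints in $X^*=R_w^*R_zP_w$ gives $X=P_wR_z^*R_w$, so $X=P_wR_z^*R_wP_z$. Since $S_M=XX^*$ and $X$ vanishes on $zM$,
\[
\operatorname{tr}(S_M)=\|X\|_{\mathrm{H.S.}}^2=\sum_{n}\|X\phi_n\|^2=\sum_{n,m}|\langle R_z^*R_w\phi_n,\psi_m\rangle|^2=\sum_{n,m}|\langle w\phi_n,z\psi_m\rangle|^2 .
\]
Your degree argument then reduces this to the terms with $n=m$. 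Replacing $R_z^*R_w$ by $R_z^{*j}R_w^j$ shows in the same way that, for every $j$, the diagonal sum equals $\|P_wR_z^{*j}R_w^jP_z\|_{\mathrm{H.S.}}^2$. This gives a basis-free meaning to the formula for all $j$, not only $j=0,1$.

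\textbf{Scope.} Your count $\dim(pH_n\ominus pzH_{n-1})=1$ uses $M=[p]$, and this restriction is necessary, not merely convenient. For a general homogeneous submodule the diagonal sums depend on how the two bases are paired. Take $M=[z,w]$: both defect spaces contain $\operatorname{span}\{z,w\}$ in degree one, and $\Sigma_0(M)=\operatorname{tr}(P_zP_wP_z)=2$. But the homogeneous orthonormal bases $(z,w,w^2,w^3,\dots)$ and $(w,z,z^2,z^3,\dots)$ give $\sum_n|\langle\phi_n,\psi_n\rangle|^2=0$. So the lemma should be read for principal homogeneous submodules with bases indexed by degree. That is the only case the paper uses, and it is exactly the case your proof covers.
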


It will be convenient to write
\begin{equation*}
\label{eq:alpha-definition}
\alpha_{n,j}
=
\left\langle
w^j\phi_n,z^j\psi_n
\right\rangle,
\qquad n,j\geq0.
\end{equation*}
Thus
\[
\Sigma_j(M)
=
\sum_{n=0}^{\infty}|\alpha_{n,j}|^2.
\]

For the zeroth invariant, the cofactor representation simplifies
considerably. The following identities are given in
\cite[Corollary~2.2]{Fat}.

\begin{lemma}
\label{lem:basic-inner-products}
Let $M=[p]$ be a homogeneous principal submodule. Then, for every
$n\geq0$,
\[
\langle\phi_n,\psi_n\rangle
=\frac{A^n_{0,n}}{D_n},\,\,\,\,\,\,\,\langle w\phi_n,z\psi_n\rangle
=
-\frac{A^{n+1}_{0,n+1}}{D_{n+1}}.
\]

Consequently, $\langle w\phi_n,z\psi_n\rangle
=-\langle\phi_{n+1},\psi_{n+1}\rangle$.
\end{lemma}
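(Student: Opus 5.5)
The plan is to expand both inner products in the cofactor bases of Lemma~\ref{lem:defect-bases}. Each Gram sum then collapses through the Laplace (adjugate) identity $\sum_{i} a_{j,i}A^n_{0,i}=\delta_{j0}D_{n+1}$, which is just \eqref{eq:inverse-cofactor} read row-wise. Throughout we use that $A^n$ is real symmetric Toeplitz, hence also persymmetric, so that $A^n_{i,j}=A^n_{j,i}=A^n_{n-j,n-i}$. In particular $A^n_{n,0}=A^n_{0,n}$ and $A^n_{n,n}=A^n_{0,0}=D_n$. The case $n=0$ is immediate: $\phi_0=\psi_0=p/\|p\|$, so $\langle\phi_0,\psi_0\rangle=1=A^0_{0,0}/D_0$.

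\textbf{First identity.} For $n\geq1$, since all coefficients are real,
\[
\langle\phi_n,\psi_n\rangle=\frac{1}{D_nD_{n+1}}\sum_{i,j}A^n_{0,i}A^n_{n,j}\,a_{i,j}.
\]
Summing first over $i$ gives $\sum_i a_{j,i}A^n_{0,i}=\delta_{j0}D_{n+1}$. Hence
\[
\langle\phi_n,\psi_n\rangle=\frac{A^n_{n,0}}{D_n}=\frac{A^n_{0,n}}{D_n}.
\]

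\textbf{Second identity.} Write $x_i=A^n_{0,i}$ and $y_j=A^n_{n,j}$. Since
\[
\langle pz^iw^{n+1-i},pz^{j+1}w^{n-j}\rangle=\gamma_{i-j-1},
\]
we get
\[
\langle w\phi_n,z\psi_n\rangle=\frac{1}{D_nD_{n+1}}\,x^{T}By,
\qquad B=(\gamma_{i-j-1})_{i,j=0}^{n}.
\]
The key observation is the column structure of $B$. For $j<n$, column $j$ of $B$ equals column $j+1$ of $A^n$. The last column of $B$ is $g=(\gamma_{n+1-i})_{i=0}^n$.

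For $j<n$ we have $x^TA^ne_{j+1}=D_{n+1}\,e_0^Te_{j+1}=0$, so these columns contribute nothing. Therefore
\[
x^TBy=y_n\,x^Tg=D_n\sum_{i=0}^nA^n_{0,i}\gamma_{n+1-i}.
\]

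\textbf{Identifying the remaining sum.} By Laplace expansion along row $0$, the sum $\sum_{i=0}^nA^n_{0,i}\gamma_{n+1-i}$ is the determinant of $A^n$ with its first row replaced by $g^T$. The remaining rows $1,\dots,n$ of $A^n$, together with $g^T$, are exactly rows $1,\dots,n+1$ of $A^{n+1}$ restricted to columns $0,\dots,n$. Here $g^T$ plays the role of row $n+1$ and sits on top. Moving it to the bottom costs a factor $(-1)^n$, so the sum equals
\[
(-1)^n\det A^{n+1}(0|n+1)=-A^{n+1}_{0,n+1}.
\]
Substituting back gives $\langle w\phi_n,z\psi_n\rangle=-A^{n+1}_{0,n+1}/D_{n+1}$. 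Comparing with the first identity at $n+1$ yields the stated consequence $\langle w\phi_n,z\psi_n\rangle=-\langle\phi_{n+1},\psi_{n+1}\rangle$.

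The main obstacle is the second identity, specifically recognizing the shifted-column structure of $B$ and then getting the permutation sign in the final determinant identification right.
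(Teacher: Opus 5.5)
Your proof is correct. The paper gives no argument for this lemma; it simply quotes Corollary~2.2 of \cite{Fat}. Your derivation from the cofactor bases of Lemma~\ref{lem:defect-bases} is therefore a self-contained substitute, and each step checks out. The identity $\sum_i a_{m,i}A^n_{0,i}=\delta_{m0}D_{n+1}$ collapses the first Gram sum to $A^n_{n,0}D_{n+1}$. Column $j<n$ of $B=(\gamma_{i-j-1})$ is column $j+1$ of $A^n$, so only $y_n\,x^{T}g=D_n\sum_iA^n_{0,i}\gamma_{n+1-i}$ survives. Moving $g^{T}$ from the top to the bottom of an $(n+1)\times(n+1)$ array costs $(-1)^n$, which yields $-A^{n+1}_{0,n+1}$.

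Your column-killing step is precisely the mechanism the paper uses later in the proof of Lemma~\ref{lem:alpha-unified-sum}, where $X^{(n)}_\ell=\sum_aA^n_{0,a}\gamma_{\ell-a}$ vanishes for $1\leq\ell\leq n$. Your second identity is the case $j=1$ there, with one difference in how the surviving quantity $X^{(n)}_{n+1}$ is handled. You identify it structurally as a corner cofactor of $A^{n+1}$, which is valid for every real homogeneous $p$. The paper instead evaluates $X^{(n)}_{n+s}$ explicitly for $p_k$ by Pfaff--Saalsch\"utz. The two agree: both give $\alpha^{(k)}_{n,1}=-k/(n+k+1)$.

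Only small remarks remain. Persymmetry is unnecessary, since $A^n_{0,0}=A^n_{n,n}=\det A^{n-1}$ follows from the Toeplitz structure and $A^n_{n,0}=A^n_{0,n}$ from symmetry. You rely on the paper's standing real-coefficient assumption; otherwise conjugates would enter. Your second-identity argument also covers $n=0$, because the formula of Lemma~\ref{lem:defect-bases} extends to $\phi_0=\psi_0=p/\|p\|$ and gives $\gamma_1/\gamma_0=-A^1_{0,1}/D_1$.
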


It follows that
\[
\Sigma_0(M)=
\sum_{n=0}^{\infty}
\left|
\frac{A^n_{0,n}}{D_n}
\right|^2,\,\,\,\,\,\,\,\,\,\Sigma_1(M)=
\sum_{n=0}^{\infty}
|\langle w\phi_n,z\psi_n\rangle|^2=
\sum_{n=1}^{\infty}
|\langle\phi_n,\psi_n\rangle|^2.\]

Since
\[
\phi_0=\psi_0=\frac{p}{\|p\|},
\]
we have $|\langle\phi_0,\psi_0\rangle|=1$. Hence, whenever the
corresponding series converge, we obtain $\Sigma_1(M)=\Sigma_0(M)-1$.





\section{Toeplitz matrices, determinants, cofactors, and the core
spectrum of $M_k$}
\label{sec3}
Let
\[
M_k=[(z-w)^k]\subset H^2(\mathbb D^2),\qquad k\geq 1,
\]
and put
\[
p_k(z,w)=(z-w)^k.
\]

We begin by determining the Toeplitz coefficients of $A^n$ explicitly.

Since
\[
p_k(z,w)
=(z-w)^k
=
\sum_{r=0}^{k}
(-1)^{k-r}\binom{k}{r}z^rw^{k-r},
\]
we have
\[
\|p_k\|^2
=
\sum_{r=0}^{k}\binom{k}{r}^2
=
\binom{2k}{k}.
\]
For $m\geq1$, a direct calculation gives
\[
\begin{aligned}
\left\langle p_kw^m,p_kz^m\right\rangle
&=
(-1)^m
\sum_{r=m}^{k}
\binom{k}{r}\binom{k}{r-m}  =
(-1)^m\binom{2k}{k-m},
\qquad 1\leq m\leq k,
\end{aligned}
\]
where the last equality follows from the Vandermonde identity. If $m>k$,
then $\left\langle p_kw^m,p_kz^m\right\rangle=0$.

Consequently, the Toeplitz coefficients associated with $p_k$ are
\[
\gamma_m
=
\begin{cases}
(-1)^m\displaystyle\binom{2k}{k-m},
& 0\leq m\leq k,\\[2mm]
0,
& m>k.
\end{cases}
\]
Since $A^n$ is the Toeplitz Gram matrix introduced in Section~\ref{sec2}, we have
$A^n=(\gamma_{|i-j|})_{i,j=0}^{n}$.

Equivalently,
\begin{equation}
\label{eq:An-entry}
a_{i,j}
=
\begin{cases}
(-1)^{|i-j|}
\displaystyle\binom{2k}{k-|i-j|},
& |i-j|\leq k,\\[2mm]
0,
& |i-j|>k,
\end{cases}
\qquad
0\leq i,j\leq n.
\end{equation}
Hence $A^n$ is a real symmetric $k$-banded Toeplitz matrix of order
$n+1$.


The matrix $A^n$ is a finite Toeplitz section associated with the
pure Fisher--Hartwig symbol $|1-\zeta|^{2k}$. Explicit inversion
formulas for such Toeplitz matrices are known through the
Duduchava--Roch formula; see \cite{BottcherDR}. In particular,
Lemma~\ref{lem:last-column-inverse} can be recovered from the
inverse-entry formula in
\cite[Section~4.2]{GarciaGarciaTierz}, after the corresponding
diagonal sign conjugation.

For completeness, and since only the last column of $(A^n)^{-1}$ is
needed here, we give a short direct finite-difference proof.

\begin{lemma}
\label{lem:last-column-inverse}
For $n\geq0$ and $0\leq j\leq n$,
\begin{equation}
\label{eq:last-column-inverse}
\bigl((A^n)^{-1}\bigr)_{j,n}
=
\frac{
\displaystyle
\binom{k+j}{k}
\binom{k+n-j-1}{k-1}
}{
\displaystyle
\binom{n+2k}{k}
}.
\end{equation}
Since $A^n$ is symmetric, the same formula holds for
$\bigl((A^n)^{-1}\bigr)_{n,j}$.
\end{lemma}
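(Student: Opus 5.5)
The plan is to verify directly that the proposed vector solves the linear system $A^n x = e_n$, where $e_n$ is the last standard basis vector. Uniqueness then follows because $A^n$ is positive definite and hence invertible. So set
\[
x_j=\frac{f(j)}{\binom{n+2k}{k}},\qquad f(t)=\binom{k+t}{k}\binom{k+n-t-1}{k-1},
\]
where both binomials are read as polynomials in $t$:
\[
\binom{k+t}{k}=\frac{(t+1)(t+2)\cdots(t+k)}{k!},\qquad
\binom{k+n-t-1}{k-1}=\frac{(n-t+1)\cdots(n-t+k-1)}{(k-1)!}.
\]
Thus $f$ is a polynomial of degree $2k-1$ in $t$. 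Its zeros are at $t=-1,\dots,-k$ and at $t=n+1,\dots,n+k-1$. At $t=n+k$ it takes the value $f(n+k)=\binom{n+2k}{k}(-1)^{k-1}$.

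Next, I interpret the rows of $A^n$ as finite differences. The identity
\[
\sum_{m=-k}^{k}\gamma_m\zeta^m=(2-\zeta-\zeta^{-1})^k=(-1)^k\zeta^{-k}(1-\zeta)^{2k}
\]
shows that $\gamma_m=(-1)^k(-1)^{m+k}\binom{2k}{m+k}$. Hence, for any function $g$ on $\mathbb Z$,
\[
\sum_{j=i-k}^{i+k}\gamma_{i-j}\,g(j)
\]
is $\pm$ the $2k$-th forward difference of $g$, taken over the window $\{i-k,\dots,i+k\}$. Applied to $g=f$, which has degree $2k-1$, this full banded sum vanishes for every $i$.

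For $0\le i\le n$, the entry $(A^nx)_i$ is this full sum with the terms for $j\notin[0,n]$ removed. The removed indices split into two groups:
\begin{itemize}
\item $j\in[i-k,-1]\subseteq[-k,-1]$, where $f(j)=0$;
\item $j\in[n+1,i+k]\subseteq[n+1,n+k]$, where $f(j)=0$ except possibly at $j=n+k$.
\end{itemize}
The index $j=n+k$ lies in the window only when $i=n$. Therefore $(A^nx)_i=0$ for $i<n$. For $i=n$ we get
\[
\binom{n+2k}{k}(A^nx)_n=-\gamma_{-k}f(n+k)=-(-1)^k(-1)^{k-1}\binom{n+2k}{k}=\binom{n+2k}{k},
\]
so $(A^nx)_n=1$. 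Hence $x$ is the last column of $(A^n)^{-1}$. The statement about $\bigl((A^n)^{-1}\bigr)_{n,j}$ follows from the symmetry of $A^n$.

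The only delicate step is the bookkeeping at the boundary. One must confirm that the extended polynomial $f$ vanishes exactly on the truncated indices, except at the single index $n+k$, and one must track the signs $\gamma_{\pm k}=(-1)^k$. The degenerate case $n=0$ (and $k=1$, where the second binomial is constant) should be checked separately, but it fits the same computation.
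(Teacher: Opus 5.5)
Your proposal is correct and is essentially the paper's proof: you use the same degree-$(2k-1)$ polynomial (the paper's $u$, your $f$), the same vanishing of the $2k$-th finite difference, and the same boundary bookkeeping, in which only the point $n+k$ survives, with value $(-1)^{k-1}\binom{n+2k}{k}$. The only cosmetic difference is that you read off $\gamma_m$ from the generating function $(2-\zeta-\zeta^{-1})^k$, whereas the paper takes it from its Vandermonde computation.
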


\begin{proof}
Since $A^n$ is the Gram matrix of the linearly independent vectors $p_kz^jw^{n-j}, 0\leq j\leq n$,
it is positive definite and hence invertible.

Define the polynomial
\[
u(x)
=
\frac{(x+1)_k}{k!}
\frac{(n-x+1)_{k-1}}{(k-1)!},
\]
where $(a)_r=a(a+1)\cdots(a+r-1)$ denotes the rising factorial. Then
$\deg u=2k-1$, and for $0\leq j\leq n$,
\[
u(j)
=
\binom{k+j}{k}
\binom{k+n-j-1}{k-1}.
\]
Moreover,
\[
u(-1)=u(-2)=\cdots=u(-k)=0
\]
and
\[
u(n+1)=u(n+2)=\cdots=u(n+k-1)=0.
\]

For $0\leq i\leq n$, consider
\[
S_i
=
\sum_{d=-k}^{k}
(-1)^d
\binom{2k}{k+d}
u(i-d).
\]
Making the change of variables $r=k+d$, we obtain
\[
\begin{aligned}
S_i
&=
(-1)^k
\sum_{r=0}^{2k}
(-1)^r
\binom{2k}{r}
u(i+k-r)=
(-1)^k\Delta^{2k}u(i-k),
\end{aligned}
\]
where $\Delta$ denotes the forward difference operator, i.e.
\[
\Delta^m f(x)
=
\sum_{r=0}^{m}
(-1)^r
\binom{m}{r}
f(x+m-r).
\]

Since
$\deg u=2k-1$, we have
$\Delta^{2k}u\equiv0$. Thus
$S_i=0$.

If $0\leq i\leq n-1$, every term in the above sum for which
$i-d\notin\{0,1,\ldots,n\}$ vanishes by the preceding zero relations.
Hence, using \eqref{eq:An-entry},
\[
\sum_{\ell=0}^{n}a_{i,\ell}u(\ell)=0,
\qquad 0\leq i\leq n-1.
\]

For $i=n$, the same argument applies except for the additional point
$n+k$. Since
\[
u(n+k)
=
(-1)^{k-1}\binom{n+2k}{k},
\]
we obtain
\[
\sum_{\ell=0}^{n}a_{n,\ell}u(\ell)
=
-(-1)^ku(n+k)
=
\binom{n+2k}{k}.
\]
Therefore,
\[
A^n
\begin{pmatrix}
u(0)\\
u(1)\\
\vdots\\
u(n)
\end{pmatrix}
=
\binom{n+2k}{k}
\begin{pmatrix}
0\\
0\\
\vdots\\
1
\end{pmatrix}.
\]
It follows that
\[
\bigl((A^n)^{-1}\bigr)_{j,n}
=
\frac{u(j)}{\binom{n+2k}{k}},
\]
which is exactly \eqref{eq:last-column-inverse}.
\end{proof}

We can now compute the determinants $D_n$.

\begin{theorem}
\label{thm:determinant-z-w-k}
Let $M_k=[(z-w)^k], k\geq1$. Then
\begin{equation*}
\label{eq:Dn-product}
D_n
=
\prod_{r=0}^{n-1}
\frac{r!(2k+r)!}{(k+r)!^2},
\qquad n\geq1.
\end{equation*}
Moreover, the consecutive determinants satisfy
\begin{equation}
\label{eq:eq:Dn-ratio}
\frac{D_{n+1}}{D_n}
=
\frac{n!(n+2k)!}{(n+k)!^2}
=
\frac{\binom{n+2k}{k}}{\binom{n+k}{k}},
\qquad n\geq0.
\end{equation}
In particular,
\[
\det A^n
=
D_{n+1}
=
\prod_{r=0}^{n}
\frac{r!(2k+r)!}{(k+r)!^2}.
\]
\end{theorem}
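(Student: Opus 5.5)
The plan is to compute the ratio $D_{n+1}/D_n$ from the last-column inverse formula of Lemma~\ref{lem:last-column-inverse} and then telescope. The diagonal entry of an inverse is a ratio of determinants. By the adjugate identity \eqref{eq:inverse-cofactor},
\[
\bigl((A^n)^{-1}\bigr)_{n,n}=\frac{A^n_{n,n}}{D_{n+1}}.
\]
Deleting the last row and column of $A^n$ leaves the leading principal submatrix of order $n$. Because $A^n$ is Toeplitz with entries $\gamma_{|i-j|}$ that do not depend on $n$, this submatrix is exactly $A^{n-1}$. Hence $A^n_{n,n}=\det A^{n-1}=D_n$ for $n\geq1$. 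For $n=0$ the same identity holds by the convention $A^0_{0,0}=1=D_0$.

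Next, set $j=n$ in \eqref{eq:last-column-inverse}:
\[
\frac{D_n}{D_{n+1}}=\bigl((A^n)^{-1}\bigr)_{n,n}
=\frac{\binom{k+n}{k}\binom{k-1}{k-1}}{\binom{n+2k}{k}}
=\frac{\binom{n+k}{k}}{\binom{n+2k}{k}}.
\]
This is the second form of \eqref{eq:eq:Dn-ratio}. To get the factorial form, write out the binomials:
\[
\frac{\binom{n+2k}{k}}{\binom{n+k}{k}}=\frac{(n+2k)!}{k!\,(n+k)!}\cdot\frac{k!\,n!}{(n+k)!}=\frac{n!\,(n+2k)!}{(n+k)!^2}.
\]

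Finally, telescope from $D_0=1$:
\[
D_n=\prod_{r=0}^{n-1}\frac{D_{r+1}}{D_r}=\prod_{r=0}^{n-1}\frac{r!\,(2k+r)!}{(k+r)!^2}.
\]
As a consistency check, the $r=0$ factor is $(2k)!/k!^2=\binom{2k}{k}=\|p_k\|^2=D_1$, as it should be. The statement $\det A^n=D_{n+1}$ is then the same formula with $n$ replaced by $n+1$.

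There is no real obstacle here. The only point needing care is the identification $A^n_{n,n}=D_n$. This relies on the Toeplitz entries $\gamma_m$ being independent of the matrix size, which holds because $a_{i,j}=\gamma_{|i-j|}$ with $\gamma_m=\langle p_kw^m,p_kz^m\rangle$, and on correctly handling the $n=0$ convention.
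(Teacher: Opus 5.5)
Your proposal is correct and is essentially the paper's proof: you read off $((A^n)^{-1})_{n,n}=D_n/D_{n+1}$ from the adjugate identity, evaluate it with Lemma~\ref{lem:last-column-inverse} at $j=n$, and telescope from $D_0=1$. The only difference is that you fold $n=0$ into the same argument through the convention $A^0_{0,0}=1$, whereas the paper verifies $D_1=\binom{2k}{k}$ directly; both are valid, since the lemma is stated and proved for $n\geq 0$.
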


\begin{proof}
We first verify \eqref{eq:eq:Dn-ratio} for $n=0$. 

By definition, $D_0=1$. Moreover, $D_1=\det A^0$. Since $A^0=(\gamma_0)$
and $\gamma_0=\binom{2k}{k}$,
we have $D_1=\binom{2k}{k}$. Therefore
\[
\frac{D_1}{D_0}
=
\binom{2k}{k}
=
\frac{0!(2k)!}{(k!)^2}
=
\frac{\binom{2k}{k}}{\binom{k}{k}},
\]
so \eqref{eq:eq:Dn-ratio} holds for $n=0$.

Now let $n\geq1$. Since the $(n,n)$-cofactor of $A^n$ is
$A^n_{n,n}=
\det A^{n-1}=D_n$,
the adjugate formula gives
\[
\bigl((A^n)^{-1}\bigr)_{n,n}
=
\frac{D_n}{D_{n+1}}.
\]
On the other hand, Lemma~\ref{lem:last-column-inverse} gives
\[
\bigl((A^n)^{-1}\bigr)_{n,n}
=
\frac{\binom{n+k}{k}}{\binom{n+2k}{k}}.
\]
Consequently,
\[
\frac{D_{n+1}}{D_n}
=
\frac{\binom{n+2k}{k}}{\binom{n+k}{k}}
=
\frac{n!(n+2k)!}{(n+k)!^2},
\qquad n\geq1.
\]
Together with the case $n=0$ considered above, this proves
\eqref{eq:eq:Dn-ratio} for every $n\geq0$.

Finally, since $D_0=1$, iteration of \eqref{eq:eq:Dn-ratio} yields, for
$n\geq1$,
\[
\begin{aligned}
D_n
&=
\prod_{r=0}^{n-1}
\frac{D_{r+1}}{D_r}=
\prod_{r=0}^{n-1}
\frac{r!(2k+r)!}{(k+r)!^2}.
\end{aligned}
\]
This completes the proof.
\end{proof}

\begin{remark}
\label{rem:Dn-first-values}
For later use, we record $D_1=\binom{2k}{k}$,
and $D_{n+1}=
D_n\frac{n!(n+2k)!}{(n+k)!^2}$.
Thus, the determinant formula is compatible with $D_1=\|p_k\|^2$.
\end{remark}

We next determine the algebraic cofactors in the last row of $A^n$.

\begin{theorem}
\label{thm:last-row-cofactor-z-w-k}
Let $M_k=[(z-w)^k]$, where $k\geq1$. For every $n\geq0$ and
$0\leq \ell\leq n$, the algebraic cofactor of the entry in row $n$
and column $\ell$ of $A^n$ is
\begin{equation*}
\label{eq:last-row-cofactor}
A^n_{n,\ell}
=
D_n
\frac{
\displaystyle
\binom{k+\ell}{k}
\binom{k+n-\ell-1}{k-1}
}{
\displaystyle
\binom{k+n}{k}
}.
\end{equation*}
In particular, every algebraic cofactor in the last row is positive.
\end{theorem}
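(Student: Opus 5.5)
The plan is to read the cofactors directly off the last column of the inverse, which Lemma~\ref{lem:last-column-inverse} already gives. By the adjugate identity \eqref{eq:inverse-cofactor},
\[
A^n_{n,\ell}=D_{n+1}\bigl((A^n)^{-1}\bigr)_{\ell,n}.
\]
Lemma~\ref{lem:last-column-inverse} evaluates the right-hand entry as
\[
\frac{\binom{k+\ell}{k}\binom{k+n-\ell-1}{k-1}}{\binom{n+2k}{k}}.
\]
It therefore remains only to replace $D_{n+1}$ by something in terms of $D_n$.

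For that I use the ratio formula \eqref{eq:eq:Dn-ratio} from Theorem~\ref{thm:determinant-z-w-k}:
\[
D_{n+1}=D_n\,\binom{n+2k}{k}\Big/\binom{n+k}{k}.
\]
Substituting, the factor $\binom{n+2k}{k}$ cancels. What is left is
\[
D_n\binom{k+\ell}{k}\binom{k+n-\ell-1}{k-1}\Big/\binom{k+n}{k},
\]
which is exactly the claimed formula. Since the ratio formula is stated for all $n\ge0$, this covers every $n$.

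The only step needing care is the boundary case $n=0$. There $A^0_{0,0}=1$ by convention and $D_0=1$. The formula gives $\binom{k}{k}\binom{k-1}{k-1}/\binom{k}{k}=1$, so it agrees. The general argument also covers it, because Lemma~\ref{lem:last-column-inverse} holds for $n=0$ and \eqref{eq:eq:Dn-ratio} holds for $n=0$.

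Positivity follows at once. $D_n>0$ because it is the determinant of a positive definite Gram matrix, or $D_0=1$. Each binomial coefficient in the formula has nonnegative lower index not exceeding its upper index, since $0\le\ell\le n$ gives $k+n-\ell-1\ge k-1$. Hence every factor is a positive integer, and every last-row cofactor is positive.

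I do not expect a real obstacle here. The substantive work was done in Lemma~\ref{lem:last-column-inverse}; the only things to get right are the index conventions (row versus column in \eqref{eq:inverse-cofactor}) and the symmetry of $A^n$, which makes $(A^n)^{-1}$ symmetric and so lets the last-row and last-column entries be interchanged.
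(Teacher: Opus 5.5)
Your proposal is correct and is essentially the paper's proof: the adjugate identity gives $A^n_{n,\ell}=D_{n+1}\bigl((A^n)^{-1}\bigr)_{\ell,n}$, Lemma~\ref{lem:last-column-inverse} supplies that entry, and the ratio $D_{n+1}/\binom{n+2k}{k}=D_n/\binom{n+k}{k}$ from Theorem~\ref{thm:determinant-z-w-k} finishes it. Your explicit positivity argument ($D_n>0$ and all binomial indices in range since $0\le\ell\le n$) and your check of the case $n=0$ fill in details the paper leaves implicit.
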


\begin{proof}
By the adjugate identity \eqref{eq:inverse-cofactor},
\[
A^n_{n,\ell}
=
\det(A^n)\bigl((A^n)^{-1}\bigr)_{\ell,n}.
\]
Using $\det A^n=D_{n+1}$ and Lemma~\ref{lem:last-column-inverse},
we obtain
\[
A^n_{n,\ell}
=
D_{n+1}
\frac{
\binom{k+\ell}{k}
\binom{k+n-\ell-1}{k-1}
}{
\binom{n+2k}{k}
}.
\]
By \eqref{eq:eq:Dn-ratio}, we have $\frac{D_{n+1}}{\binom{n+2k}{k}}
=
\frac{D_n}{\binom{n+k}{k}}$.
Therefore, we obtain
\[
A^n_{n,\ell}
=
D_n
\frac{
\binom{k+\ell}{k}
\binom{k+n-\ell-1}{k-1}
}{
\binom{n+k}{k}
},
\]
as required.
\end{proof}

\begin{remark}
\label{rem:first-row-cofactor-z-w-k}
The matrix $A^n$ is centrosymmetric. Hence its inverse, and therefore its
cofactor matrix, is also centrosymmetric. Consequently, $A^n_{0,\ell}
=A^n_{n,n-\ell}$.

Using Theorem~\ref{thm:last-row-cofactor-z-w-k}, we obtain
\begin{equation*}
\label{eq:first-row-cofactor}
A^n_{0,\ell}
=
D_n
\frac{
\displaystyle
\binom{k+n-\ell}{k}
\binom{k+\ell-1}{k-1}
}{
\displaystyle
\binom{k+n}{k}
},
\qquad 0\leq\ell\leq n.
\end{equation*}
In particular, we know that
$A^n_{0,0}=A^n_{n,n}=D_n$
and
\begin{equation}
\label{eq:opposite-corner-cofactors}
A^n_{0,n}
=
A^n_{n,0}
=
\frac{k}{n+k}D_n.
\end{equation}
These formulas will be used both in the spectral analysis below and in
the computation of the numerical invariants in Section~\ref{sec4}.
\end{remark}


In general, determining the spectral data of the core operator
associated with a submodule is difficult. For homogeneous principal
submodules, however, the eigenvalues can be expressed in terms of the
determinants $D_n$ and the corner cofactors $A^n_{0,n}$; see
\cite[Theorem~3.7]{Fat}. Combining this general formula with
Theorem~3.1 and \eqref{eq:opposite-corner-cofactors}, we obtain a particularly
simple spectrum for $M_k$.

\begin{theorem}
\label{thm:core-spectrum}
Let $M_k=[(z-w)^k], k\geq1$, and let $C_{M_k}$ be its core operator. Then the eigenvalues of
$C_{M_k}$ are
\[
\{0,1\}
\cup
\left\{
\pm\frac{k}{n+k}:n\geq1
\right\}.
\]
The eigenvalue $0$ has infinite multiplicity, while the eigenvalue $1$
is simple. Moreover,
\[
\sigma(C_{M_k})
=
\{0,1\}
\cup
\left\{
\pm\frac{k}{n+k}:n\geq1
\right\}.
\]
\end{theorem}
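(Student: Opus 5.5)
I will derive the result from the general eigenvalue description for homogeneous principal submodules in \cite[Theorem~3.7]{Fat}. That result says the following. The core operator $C_M$ vanishes on $M^\perp$. On $M$ it is reduced by the graded pieces $pH_n$, and on each piece it acts on the two-dimensional span of $\phi_n$ and $\psi_n$. There its nonzero eigenvalues are $\pm|\langle\phi_n,\psi_n\rangle|$ for $n\geq1$. For $n=0$ one has $\phi_0=\psi_0$, which contributes the eigenvalue $1$. All remaining vectors lie in the kernel.

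Concretely, on the homogeneous piece $pH_n$ (dimension $n+1$) the core operator $C_M=P_z+P_w-I$ restricts to $P_{\phi_n}+P_{\psi_n}-I$. Here $P_z$ restricted to $pH_n$ is the rank-one projection onto $\phi_n$, because $(M\ominus zM)\cap pH_n=\mathbb C\phi_n$, and similarly for $P_w$. On $\operatorname{span}\{\phi_n,\psi_n\}^\perp$ it acts as $-I$ plus a correction, so I will follow the formula as stated in \cite[Theorem~3.7]{Fat} rather than rederive it. For two unit vectors $\phi,\psi$ with $|\langle\phi,\psi\rangle|=c<1$, the operator $P_\phi+P_\psi$ on their span has eigenvalues $1\pm c$. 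The general theorem packages this so that the nonzero eigenvalues coming from degree $n$ are $\pm\,|A^n_{0,n}|/D_n$.

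The first key step is therefore to identify the eigenvalues through Lemma~\ref{lem:basic-inner-products}. It gives $\langle\phi_n,\psi_n\rangle=A^n_{0,n}/D_n$. Substituting \eqref{eq:opposite-corner-cofactors} yields
\[
\langle\phi_n,\psi_n\rangle=\frac{k}{n+k}, \qquad n\geq0.
\]
At $n=0$ this equals $1$, which is the simple eigenvalue $1$. For $n\geq1$ the value lies strictly between $0$ and $1$, so the vectors $\phi_n,\psi_n$ are linearly independent and produce the two eigenvalues $\pm k/(n+k)$.

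The second step collects the remaining spectral data. The eigenvalue $0$ has infinite multiplicity, since $M^\perp$ is infinite dimensional and $C_M$ vanishes there. The eigenvalue $1$ is simple. Its eigenvector is $p$ itself, because for $n\geq1$ the values $k/(n+k)$ are all less than $1$, so no other degree contributes the eigenvalue $1$. The values $\pm k/(n+k)$ are pairwise distinct across $n$, so each is simple.

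The final step passes from eigenvalues to the spectrum. By the preliminaries, $C_{M_k}$ is Hilbert--Schmidt, hence compact and self-adjoint. Its spectrum is therefore the closure of its set of eigenvalues. The only accumulation point of $\pm k/(n+k)$ is $0$, which is already included, so $\sigma(C_{M_k})$ equals the displayed set.

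The main obstacle is the precise form of the general result \cite[Theorem~3.7]{Fat}. I need to confirm that on each degree $n\geq1$ the nonzero eigenvalues are exactly $\pm A^n_{0,n}/D_n$, rather than, for example, $1\pm A^n_{0,n}/D_n$ shifted by the $-I$ part. If the citation is ambiguous, I would verify it directly. Using $C_M^2\simeq T_M\oplus S_M$, the nonzero eigenvalues of $T_M=P_zP_wP_z$ are $|\langle\phi_n,\psi_n\rangle|^2$. Those of $S_M$ are $|\langle w\phi_n,z\psi_n\rangle|^2=|\langle\phi_{n+1},\psi_{n+1}\rangle|^2$. So $C_M^2$ has eigenvalues $(k/(n+k))^2$, each appearing twice for $n\geq1$ and once for $n=0$. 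Together with a sign or trace argument on each two-dimensional block, this recovers the $\pm$ pairing.
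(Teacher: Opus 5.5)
Your outline is the paper's: invoke \cite[Theorem~3.7]{Fat}, insert $A^n_{0,n}=\frac{k}{n+k}D_n$, then handle $0$, $1$ and the closure. There is a gap, and it sits exactly at the step you flagged.

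The formula the paper takes from \cite{Fat} is $\pm\bigl(1-(D_n^2-(A^n_{0,n})^2)^2/(D_{n-1}D_n^2D_{n+1})\bigr)^{1/2}$, not $\pm|A^n_{0,n}|/D_n$. You give no argument that the two agree. They do agree, via the Desnanot--Jacobi identity applied to $A^n$:
\begin{itemize}
\item deleting the first and last rows and columns of $A^n$ leaves $A^{n-2}$;
\item deleting row and column $0$ (or $n$) leaves $A^{n-1}$;
\item $A^n_{0,n}=A^n_{n,0}$ by symmetry.
\end{itemize}
Hence $D_{n+1}D_{n-1}=D_n^2-(A^n_{0,n})^2$, and the radicand collapses to $(A^n_{0,n}/D_n)^2$. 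With this identity added, your shortcut is valid. It also makes the paper's separate evaluation $D_n^2/(D_{n-1}D_{n+1})=(n+k)^2/(n(n+2k))$ via Theorem~\ref{thm:determinant-z-w-k} unnecessary.

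Your own operator sketch cannot replace the citation, because $C_M\neq P_z+P_w-I$. From the definition, $C_M=P_z+P_w-I+R_zR_wR_z^*R_w^*$. The last term equals $(R_zR_w)(R_zR_w)^*$, the projection onto $zwM$; this is the ``correction'' you could not pin down. Without it, $C_M$ would be $-I$ on the $(n-1)$-dimensional complement of $\operatorname{span}\{\phi_n,\psi_n\}$ in $pH_n$, which is incompatible with compactness.

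The correct picture is as follows. $C_M$ vanishes on $zwM$. Now consider the two-dimensional space $pH_n\ominus pzwH_{n-2}$ (with $H_{-1}=\{0\}$). It contains $\phi_n$ and $\psi_n$, and is spanned by them since $\frac{k}{n+k}<1$. On it, $C_M$ equals $P_{\phi_n}+P_{\psi_n}-I$, whose eigenvalues are $\pm\frac{k}{n+k}$.

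This gives a self-contained proof. It also yields directly:
\begin{itemize}
\item simplicity of every $\pm\frac{k}{n+k}$, which the paper obtains only later, in Corollary~\ref{cor:core-multiplicity};
\item infinite multiplicity of $0$, from the infinite-dimensional kernel $zwM$.
\end{itemize}

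Finally, your fallback via $C_M^2\simeq T_M\oplus S_M$ determines only $|\lambda|$. It cannot rule out $+\frac{k}{n+k}$ occurring twice. A global trace argument is unavailable, since $\sum_n\frac{k}{n+k}=\infty$ means $C_M$ is not trace class. The $\pm$ pairing requires exactly the block structure described above (trace zero on each two-dimensional block).
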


\begin{proof}
For a homogeneous principal submodule, the nontrivial eigenvalues of
the core operator are given by
\[
\pm
\left(
1-
\frac{
\left(
D_n^2-(A^n_{0,n})^2
\right)^2
}{
D_{n-1}D_n^2D_{n+1}
}
\right)^{1/2},
\qquad n\geq1.
\]
In the present setting all the quantities involved are positive and
real. By \eqref{eq:opposite-corner-cofactors}, we have $A^n_{0,n}=
\frac{k}{n+k}D_n$.

Hence
\[
\begin{aligned}
D_n^2-(A^n_{0,n})^2
&=
D_n^2
\left(
1-\frac{k^2}{(n+k)^2}
\right)=
D_n^2
\frac{n(n+2k)}{(n+k)^2}.
\end{aligned}
\]

Using Theorem \ref{thm:determinant-z-w-k}, we obtain
\[
\begin{aligned}
&
\frac{
\left(
D_n^2-(A^n_{0,n})^2
\right)^2
}{
D_{n-1}D_n^2D_{n+1}
}=
\frac{D_n^2}{D_{n-1}D_{n+1}}
\left(
1-\frac{k^2}{(n+k)^2}
\right)^2=
\frac{(n+k)^2}{n(n+2k)}
\left(
\frac{n(n+2k)}{(n+k)^2}
\right)^2=
\frac{n(n+2k)}{(n+k)^2}.
\end{aligned}
\]
It follows that
\[
\begin{aligned}
1-
\frac{
\left(
D_n^2-(A^n_{0,n})^2
\right)^2
}{
D_{n-1}D_n^2D_{n+1}
}
&=
1-
\frac{n(n+2k)}{(n+k)^2}=
\frac{k^2}{(n+k)^2}.
\end{aligned}
\]
Thus the corresponding eigenvalues are
\[
\pm\frac{k}{n+k},
\qquad n\geq1.
\]


The eigenvalue $1$ is simple. Indeed, for a singly generated
homogeneous submodule,
\[
E_1(C_{M_k})
=
M_k\ominus(zM_k+wM_k)
=
\mathbb Cp_k,
\]
where $p_k=(z-w)^k$. Hence, we have 
\[
\dim E_1(C_{M_k})=1.
\]

Moreover, $C_{M_k}$ vanishes on $M_k^\perp$. By the graded
decomposition
\[
M_k=\bigoplus_{n=0}^{\infty}p_k\mathcal H_n,
\]
we have $M_k\cap\mathcal H_{n+k}=p_k\mathcal H_n, n\geq0$. Since multiplication by the nonzero polynomial $p_k$ is injective,
\[
\dim(p_k\mathcal H_n)=n+1,
\qquad
\dim\mathcal H_{n+k}=n+k+1.
\]
Hence $\dim\bigl(\mathcal H_{n+k}\ominus p_k\mathcal H_n\bigr)=k$.
These $k$-dimensional subspaces are contained in $M_k^\perp$ and are
mutually orthogonal as $n$ varies. Thus $M_k^\perp$ is
infinite-dimensional, and consequently $0$ has infinite multiplicity.



Finally, we konw that
\[
\frac{k}{n+k}\longrightarrow0
\qquad(n\to\infty).
\]
Since $C_{M_k}$ is compact and self-adjoint, every nonzero point of its
spectrum is an eigenvalue and the only possible accumulation point of
the spectrum is $0$. Therefore,
\[
\sigma(C_{M_k})
=
\{0,1\}
\cup
\left\{
\pm\frac{k}{n+k}:n\geq1
\right\}.
\]
\end{proof}

\begin{remark}
The spectral behavior of the family $[(z-w)^k]$ differs substantially
from that of $[z^k-w^k]$. For the latter family, the nonzero spectrum
of the core operator is independent of $k$. In contrast,
Theorem~\ref{thm:core-spectrum} shows that for $M_k=[(z-w)^k]$, the largest nontrivial positive eigenvalue is $
\lambda_k=\frac{k}{k+1}$.
Hence, we have
$k=\frac{\lambda_k}{1-\lambda_k}$, so the core spectrum itself determines the parameter $k$. Thus, for
the repeated-factor family considered here, the order of the factor
$z-w$ is already visible in the spectral data of the core operator.
\end{remark}

\section{Numerical invariants and strict monotonicity}
\label{sec4}
For $j\geq1$, define
$\alpha_{n,j}^{(k)}
:=
\langle w^j\phi_n,z^j\psi_n\rangle$.


For $t\in\mathbb Z$, set
\[
\gamma_t
=
\begin{cases}
(-1)^{|t|}
\displaystyle\binom{2k}{k-|t|},
& |t|\leq k,\\[2mm]
0,& |t|>k.
\end{cases}
\]
Then
\[
\left\langle
p_k z^a w^{n-a+j},
p_k z^{b+j}w^{n-b}
\right\rangle
=
\gamma_{j+b-a}.
\]
Consequently, Lemma~\ref{lem:defect-bases} gives
\begin{equation}
\label{eq:alpha-double-sum}
\alpha_{n,j}^{(k)}
=
\frac{1}{D_nD_{n+1}}
\sum_{a,b=0}^{n}
A^n_{0,a}\,
\gamma_{j+b-a}\,
A^n_{n,b}.
\end{equation}

We first reduce \eqref{eq:alpha-double-sum} to a single finite sum.

\begin{lemma}
\label{lem:alpha-unified-sum}
For every $n\geq0$ and $j\geq1$,
\begin{equation}
\label{eq:alpha-unified-sum}
\begin{aligned}
\alpha_{n,j}^{(k)}
={}&
\frac{1}{\binom{n+k}{k}}
\sum_{s=1}^{k}
(-1)^s k\binom{k-1}{s-1}
\frac{(n+1)_{s-1}}{(n+k+1)_s}
\binom{n-j+s+k}{k}
\binom{j-s+k-1}{k-1}.
\end{aligned}
\end{equation}
Here $(a)_m=a(a+1)\cdots(a+m-1)$ denotes the rising factorial. Equivalently, only the indices
$\max\{1,j-n\}
\leq s\leq
\min\{k,j\}$
can contribute to the sum. 
\end{lemma}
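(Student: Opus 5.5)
The plan is to start from the double-sum representation \eqref{eq:alpha-double-sum} and collapse the inner sum over $b$ using the same finite-difference mechanism as in the proof of Lemma~\ref{lem:last-column-inverse}. By Theorem~\ref{thm:last-row-cofactor-z-w-k} and \eqref{eq:eq:Dn-ratio},
\[
A^n_{n,b}=\frac{D_{n+1}}{\binom{n+2k}{k}}\,u(b),
\qquad
u(x)=\frac{(x+1)_k}{k!}\frac{(n-x+1)_{k-1}}{(k-1)!},
\]
so that
\[
\alpha^{(k)}_{n,j}=\frac{1}{D_n\binom{n+2k}{k}}\sum_{a=0}^n A^n_{0,a}\,T_a,
\qquad
T_a:=\sum_{b=0}^n\gamma_{j+b-a}\,u(b).
\]

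Because $u$ is a polynomial of degree $2k-1$, the full convolution $\sum_{b\in\mathbb Z}\gamma_{j+b-a}u(b)$ is $\pm\Delta^{2k}u(\cdot)=0$. Hence $T_a$ equals minus the sum over the lattice points $b\notin[0,n]$ with $|j+b-a|\le k$. Using the zeros $u(-1)=\dots=u(-k)=0$ and $u(n+1)=\dots=u(n+k-1)=0$, only two kinds of boundary terms survive:
\begin{itemize}
\item points $b\le -k-1$, which occur only when $a<j$;
\item points $b\ge n+k$, which require $a-j+k\ge n+k$, i.e.\ $a\ge n+j$. This is impossible since $j\ge1$ and $a\le n$.
\end{itemize}
So
\[
T_a=-\sum_{b=a-j-k}^{-k-1}\gamma_{j+b-a}\,u(b).
\]
At these points $u(b)$ factors explicitly: $(b+1)_k$ becomes a signed binomial coefficient, and $(n-b+1)_{k-1}$ is again binomial. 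I would re-index by $s$, measuring the offset of $b$ below $-k$, so that $j+b-a$ and the values of $u$ are expressed through $s$ and $j-a$. This gives $T_a$ as a short signed sum of products of binomials, and $T_a=0$ for $a\ge j$. In particular only $a\le\min\{n,j-1\}$ contributes, which is consistent with the stated range $\max\{1,j-n\}\le s\le\min\{k,j\}$.

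Next I will insert Remark~\ref{rem:first-row-cofactor-z-w-k},
\[
A^n_{0,a}=D_n\frac{\binom{k+n-a}{k}\binom{k+a-1}{k-1}}{\binom{n+k}{k}},
\]
and interchange the $a$- and $s$-summations. After the interchange, the factor $\binom{n-j+s+k}{k}\binom{j-s+k-1}{k-1}$ should emerge as the first-row cofactor weight evaluated at the shifted index $a=j-s$. What remains for each fixed $s$ is a one-dimensional binomial sum, which I expect to evaluate by Chu--Vandermonde (or Pfaff--Saalschütz). It should produce the coefficient
\[
(-1)^s\,k\binom{k-1}{s-1}\frac{(n+1)_{s-1}}{(n+k+1)_s}\Big/\binom{n+2k}{k}
\]
up to normalization. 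The overall $\binom{n+2k}{k}$ must then cancel against the prefactor.

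The main obstacle is this last step: organizing the double sum over the boundary terms and $a$ so that it becomes one terminating hypergeometric sum per $s$, and matching the closed form exactly. As a fallback I would verify the identity by a Zeilberger-type recurrence in $n$ or $j$, checking that both sides satisfy the same first-order recurrence with matching initial data. Two sanity checks are available:
\begin{itemize}
\item $j=1$, where only $s=1$ survives and the formula must reduce to $-\tfrac{k}{n+k}$, consistent with Lemma~\ref{lem:basic-inner-products} and \eqref{eq:opposite-corner-cofactors};
\item $k=1$, compared against the known results.
\end{itemize}
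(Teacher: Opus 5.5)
Your reduction is correct, and with $s:=j-a$ it is the paper's argument reflected through the centrosymmetry $A^n_{n,b}=A^n_{0,n-b}$. Concretely, $T_{j-s}=\binom{n+2k}{k}D_{n+1}^{-1}X^{(n)}_{n+s}$ in the paper's notation, so your vanishing of $T_a$ for $a\ge j$ is the paper's $X^{(n)}_\ell=0$ for $1\le\ell\le n$. You should also record that $T_a=0$ for $a<j-k$. This is immediate, since then $\gamma_{j+b-a}=0$ for every $b\ge 0$, and it is what caps $s$ at $k$. The genuinely new ingredient is your use of $\Delta^{2k}u=0$. It trades the interior sum over $0\le b\le\min\{n,k-s\}$, which is what the paper evaluates with $h=b$, for the exterior sum over $-k-s\le b\le -k-1$, which has exactly $s$ terms.

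The gap is that the proposal stops exactly where the content of the lemma lies, and the organization you sketch for that step would not work as described. Suppose $s$ denotes the offset of $b$ below $-k$. Then interchanging the $a$- and $s$-sums produces $-\sum_s u(-k-s)\sum_a A^n_{0,a}\gamma_{j-k-s-a}$. These are row-type sums at indices $j-k-s\le j-k-1$, not single cofactors, so $A^n_{0,j-s}$ does not emerge. In fact no interchange is needed. Let $s=j-a$ index the outer sum, and keep the offset $m$ (with $b=-k-m$) as the inner variable. Then $\alpha^{(k)}_{n,j}=\binom{n+2k}{k}^{-1}\sum_s D_n^{-1}A^n_{0,j-s}\,T_{j-s}$, where
\[
T_{j-s}=(-1)^{s+1}\sum_{m=1}^{s}(-1)^m\binom{2k}{s-m}\binom{m+k-1}{k}\binom{n+2k+m-1}{k-1},\qquad 1\le s\le k .
\]
The $m$-sum equals $-\binom{2k}{s-1}\binom{n+2k}{k-1}$ times the balanced series ${}_3F_2(-(s-1),k+1,n+2k+1;\,2k-s+2,\,n+k+2;\,1)$. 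This is a genuine ${}_3F_2$, so Chu--Vandermonde does not suffice, but Pfaff--Saalsch\"utz applies. It gives $T_{j-s}=(-1)^s k\binom{k-1}{s-1}\binom{n+2k}{k}\frac{(n+1)_{s-1}}{(n+k+1)_s}$, which is exactly the lemma. As a small bonus, both lower parameters are positive, so the nondegeneracy check the paper needs for its lower parameter $-(n+k-1)$ disappears. Finally, your $j=1$ sanity check is off by one. The formula gives $-\frac{k}{n+k+1}=-\langle\phi_{n+1},\psi_{n+1}\rangle$, as Lemma~\ref{lem:basic-inner-products} predicts, so obtaining $-\frac{k}{n+k}$ would signal an error rather than confirm the result.
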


\begin{proof}
Throughout this lemma, for integers $a$ and $b$, we use the convention
\[
\binom{a}{b}=0
\qquad\text{unless}\qquad
0\leq b\leq a.
\]

For $\ell\geq1$, define
\[
X_{\ell}^{(n)}
:=
\sum_{a=0}^{n}
A^n_{0,a}\gamma_{\ell-a}.
\]
Then \eqref{eq:alpha-double-sum} can be written as
\begin{equation}
\label{eq:alpha-via-X}
\alpha_{n,j}^{(k)}
=
\frac{1}{D_nD_{n+1}}
\sum_{b=0}^{n}
X_{j+b}^{(n)}A^n_{n,b}.
\end{equation}

If $1\leq\ell\leq n$, then $(\gamma_{\ell-a})_{a=0}^{n}$
is the $\ell$-th row of $A^n$. Hence the cofactor identity gives $X_{\ell}^{(n)}=0,
1\leq\ell\leq n$. Moreover, $X_{\ell}^{(n)}=0,\ell\geq n+k+1$, because $\gamma_{\ell-a}=0$ for every $0\leq a\leq n$.

It therefore remains to determine $X_{n+s}^{(n)},
1\leq s\leq k$.
Let $h=n-a$.
Since $\gamma_{s+h}=0$ whenever $s+h>k$, only the indices $0\le h\le \min\{n,k-s\}$ can contribute. Hence
\[
X_{n+s}^{(n)}
=
\sum_{h=0}^{\min\{n,k-s\}}
A^n_{0,n-h}\gamma_{s+h}.
\]

By the first-row cofactor formula from Section~3,
\[
\frac{A^n_{0,n-h}}{D_n}
=
\frac{
\displaystyle
\binom{k+h}{k}
\binom{n+k-h-1}{k-1}
}{
\displaystyle
\binom{n+k}{k}
},
\qquad 0\le h\le n,
\]
whereas
\[
\gamma_{s+h}
=
(-1)^{s+h}
\binom{2k}{k-s-h}.
\]
Therefore
\[
\frac{X_{n+s}^{(n)}}{D_n}
=
\frac{(-1)^s}{\binom{n+k}{k}}
\sum_{h=0}^{\min\{n,k-s\}}
(-1)^h
\binom{k+h}{k}
\binom{n+k-h-1}{k-1}
\binom{2k}{k-s-h}.
\]

With the zero-binomial convention adopted above, the upper limit may
be extended from $\min\{n,k-s\}$ to $k-s$, since for $h>n$ one has
\[
\binom{n+k-h-1}{k-1}=0.
\]
Thus
\[
\begin{aligned}
\frac{X_{n+s}^{(n)}}{D_n}
={}&
\frac{(-1)^s}{\binom{n+k}{k}}
\sum_{h=0}^{k-s}
(-1)^h
\binom{k+h}{k}
\binom{n+k-h-1}{k-1}
\binom{2k}{k-s-h}.
\end{aligned}
\]

To evaluate the finite sum, we use the Pfaff--Saalsch\"utz summation formula; see, for example, \cite[Section~2.3]{Slater}.
For $N\in\mathbb N_0$,
\[
{}_3F_2
\left(
\begin{matrix}
-N,a,b\\
c,1+a+b-c-N
\end{matrix};1
\right)
=
\frac{(c-a)_N(c-b)_N}
     {(c)_N(c-a-b)_N},
\]
provided that the denominator factors involved are nonzero.


By direct computation, we obtain
\[
\begin{aligned}
&\sum_{h=0}^{k-s}
(-1)^h
\binom{k+h}{k}
\binom{n+k-h-1}{k-1}
\binom{2k}{k-s-h}=
\binom{n+k-1}{k-1}
\binom{2k}{k-s}
{}_3F_2
\left(
\begin{matrix}
k+1,-n,-(k-s)\\
-(n+k-1),k+s+1
\end{matrix};1
\right).
\end{aligned}
\]

Now set $N=k-s, a=k+1,
b=-n, c=k+s+1$. Then $1+a+b-c-N=-(n+k-1)$, so the above ${}_3F_2(1)$ is precisely of
Pfaff--Saalsch\"utz type. Notice also that $0\leq h\leq k-s\leq k-1\leq n+k-1$,
so none of the factors in $(-(n+k-1))_h$ vanishes in the effective summation range. Moreover, $(k+s+1)_{k-s}>0,
(n+s)_{k-s}>0$. Hence all denominator factors occurring in the above
Pfaff--Saalsch\"utz summation are nonzero.


Therefore,
\[
\begin{aligned}
&{}_3F_2
\left(
\begin{matrix}
k+1,-n,-(k-s)\\
-(n+k-1),k+s+1
\end{matrix};1
\right)=
\frac{
(s)_{k-s}(n+k+s+1)_{k-s}
}{
(k+s+1)_{k-s}(n+s)_{k-s}
}.
\end{aligned}
\]
Consequently,
\[
\begin{aligned}
&\sum_{h=0}^{k-s}
(-1)^h
\binom{k+h}{k}
\binom{n+k-h-1}{k-1}
\binom{2k}{k-s-h}=
k\binom{k-1}{s-1}
\binom{n+2k}{k}
\frac{(n+1)_{s-1}}{(n+k+1)_s}.
\end{aligned}
\]
Consequently,
\begin{equation}
\label{eq:X-n-s}
\frac{X_{n+s}^{(n)}}{D_n}
=
(-1)^s
k\binom{k-1}{s-1}
\frac{\binom{n+2k}{k}}{\binom{n+k}{k}}
\frac{(n+1)_{s-1}}{(n+k+1)_s}.
\end{equation}

Returning to \eqref{eq:alpha-via-X}, the only possible nonzero terms
occur when $j+b=n+s,
1\leq s\leq k$. Therefore,
\[
\alpha_{n,j}^{(k)}
=
\frac{1}{D_nD_{n+1}}
\sum_{s=1}^{k}
X_{n+s}^{(n)}
A^n_{n,n-j+s},
\]
where a cofactor with an index outside $\{0,\ldots,n\}$ is understood
to be zero.

By Theorem~\ref{thm:last-row-cofactor-z-w-k},
\[
A^n_{n,n-j+s}
=
D_n
\frac{
\displaystyle
\binom{n-j+s+k}{k}
\binom{j-s+k-1}{k-1}
}{
\displaystyle
\binom{n+k}{k}
}.
\]
Using also
\[
\frac{D_n}{D_{n+1}}
=
\frac{\binom{n+k}{k}}{\binom{n+2k}{k}},
\]
and substituting \eqref{eq:X-n-s}, we obtain
\[
\begin{aligned}
\alpha_{n,j}^{(k)}
={}&
\frac{1}{\binom{n+k}{k}}
\sum_{s=1}^{k}
(-1)^s
k\binom{k-1}{s-1}
\frac{(n+1)_{s-1}}{(n+k+1)_s}
\binom{n-j+s+k}{k}
\binom{j-s+k-1}{k-1}.
\end{aligned}
\]
This proves the desired formula.

Finally,
\[
\binom{n-j+s+k}{k}\neq0
\quad\Longrightarrow\quad
s\geq j-n,
\]
whereas
\[
\binom{j-s+k-1}{k-1}\neq0
\quad\Longrightarrow\quad
s\leq j.
\]
Together with $1\leq s\leq k$, this gives the effective range $\max\{1,j-n\}\leq s\leq\min\{k,j\}$.
\end{proof}


An immediate consequence is the following lower support bound.

\begin{corollary}
\label{cor:lower-support}
For every $j\geq1$,
\[
\alpha_{n,j}^{(k)}=0
\qquad\text{whenever}\qquad
n<j-k.
\]
\end{corollary}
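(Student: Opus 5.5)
This follows directly from the effective summation range in Lemma~\ref{lem:alpha-unified-sum}. We fix $j\geq1$ and $n\geq0$ with $n<j-k$, that is, $j-n>k$. We then read off the constraints on the summation index $s$ in \eqref{eq:alpha-unified-sum}.

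By construction, $s$ runs over $1\leq s\leq k$. The factor $\binom{n-j+s+k}{k}$ is nonzero only when $n-j+s+k\geq k$, i.e.\ $s\geq j-n$, under the zero-binomial convention used in the proof of the lemma. Since $j-n>k$, this would force $s>k$, which is incompatible with $s\leq k$. Hence for every $s\in\{1,\ldots,k\}$ we have $0\le n-j+s+k<k$ or $n-j+s+k<0$. In either case the binomial coefficient $\binom{n-j+s+k}{k}$ vanishes, so every summand is zero.

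The only point requiring care is that the vanishing convention for $\binom{a}{b}$ with $a<b$ (including negative $a$) matches the origin of this factor. In the proof of the lemma it comes from the cofactor $A^n_{n,n-j+s}$, whose column index $n-j+s$ is negative when $s<j-n$. Such cofactors were explicitly declared zero there, since they correspond to no term in \eqref{eq:alpha-via-X}. So the vanishing is genuine and not an artifact of extending the binomial symbol to negative arguments.

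Equivalently, the effective range $\max\{1,j-n\}\leq s\leq\min\{k,j\}$ stated in Lemma~\ref{lem:alpha-unified-sum} is empty because $j-n>k$. Therefore $\alpha_{n,j}^{(k)}=0$. There is no real obstacle beyond this bookkeeping.
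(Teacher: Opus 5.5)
Your proof is correct and is essentially the paper's argument: when $n<j-k$, the factor $\binom{n-j+s+k}{k}$ forces $s\geq j-n>k$, which is incompatible with $1\leq s\leq k$, so every summand in \eqref{eq:alpha-unified-sum} vanishes. Your remark that this vanishing corresponds to the cofactor $A^n_{n,n-j+s}$ having a negative column index (set to zero in the proof of Lemma~\ref{lem:alpha-unified-sum}) is a useful consistency check, but the argument does not depend on it.
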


\begin{proof}
If $n<j-k$, then $j-n>k$. Hence there is no integer $s$ satisfying
\[
s\geq j-n
\qquad\text{and}\qquad
1\leq s\leq k.
\]
Thus every term in the finite-sum formula vanishes.
\end{proof}


The following adjacent relation is one of the main structural
identities of the paper.

\begin{theorem}
\label{thm:adjacent-relation}
For every $j\ge 1$ and $n\ge \max\{0,j-k\}$, the inner products $\alpha_{n,j}^{(k)}$ satisfy
\begin{equation}
\label{eq:adjacent-alpha}
(n+k+j+2)\alpha_{n+1,j+1}^{(k)}
=
(n-j+k+1)\alpha_{n,j+1}^{(k)}
+(n+k+j)\alpha_{n,j}^{(k)}
-(n-j+k+1)\alpha_{n+1,j}^{(k)}.
\end{equation}
\end{theorem}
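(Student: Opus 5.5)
We will derive \eqref{eq:adjacent-alpha} directly from the finite-sum representation \eqref{eq:alpha-unified-sum} of Lemma~\ref{lem:alpha-unified-sum}. Write
\[
\alpha_{n,j}^{(k)}=\sum_{s=1}^{k}T_{n,j}(s),\qquad
T_{n,j}(s)=\frac{(-1)^s k\binom{k-1}{s-1}}{\binom{n+k}{k}}
\frac{(n+1)_{s-1}}{(n+k+1)_s}
\binom{n-j+s+k}{k}\binom{j-s+k-1}{k-1}.
\]
Each of the four quantities appearing in \eqref{eq:adjacent-alpha} is then a sum over the same index $s\in\{1,\dots,k\}$, with the zero-binomial convention handling the boundary. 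The hypothesis $n\geq\max\{0,j-k\}$ keeps all four terms in the regime covered by the lemma; by Corollary~\ref{cor:lower-support} some of them may vanish identically. The identity to prove becomes $\sum_{s=1}^{k}F(s)=0$, where
\[
F(s)=(n+k+j+2)T_{n+1,j+1}(s)-(n-j+k+1)T_{n,j+1}(s)-(n+k+j)T_{n,j}(s)+(n-j+k+1)T_{n+1,j}(s).
\]

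The plan is to find a telescoping certificate $G(s)$ of the form $G(s)=R(s)\,T_{n,j}(s)$, with $R$ a rational function of $s$ (depending on $n,j,k$), such that
\[
F(s)=G(s+1)-G(s),
\]
together with the boundary conditions $G(1)=0$ (or a vanishing factor at $s=1$) and $G(k+1)=0$. The latter should come from the factor $\binom{k-1}{s-1}$, which vanishes at $s=k+1$. To find $R$, we first express each summand as $T_{n,j}(s)$ times an explicit rational function of $s$. Explicitly, $T_{n+1,j}/T_{n,j}$, $T_{n,j+1}/T_{n,j}$ and $T_{n+1,j+1}/T_{n,j}$ are all ratios of products of linear factors, coming from shifting $n$ in $\binom{n+k}{k}$, $(n+1)_{s-1}$, $(n+k+1)_s$ and $\binom{n-j+s+k}{k}$, and from shifting $j$ in the last two binomials. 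Likewise $T_{n,j}(s+1)/T_{n,j}(s)$ is rational in $s$. Gosper's algorithm, or simply an ansatz with $R$ linear or quadratic in $s$ over an appropriate denominator such as $(n-j+s+1)$ or $(j-s+k)$, then reduces the telescoping equation to a polynomial identity in $s$ that can be verified by expansion.

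Finally, we must check carefully the edge effects caused by the zero-binomial convention. When $j-n\geq 1$, the effective lower limit is $s=j-n$ rather than $1$. When $j<k$, the factor $\binom{j-s+k-1}{k-1}$ truncates at $s=j$. In both cases the certificate must vanish at the truncation points, or the rational ratios must be interpreted via the polynomial forms, so that multiplying by rational factors does not create spurious poles. To handle this, we will write $G$ as a polynomial multiple of binomials, that is, clear the denominators into shifted binomials, so that it is naturally zero outside the support.

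The main obstacle is finding the explicit certificate $R(s)$ and controlling these boundary terms. If a low-degree ansatz fails, the fallback is to run Zeilberger-style creative telescoping symbolically for general $k$, treating $k$ as a parameter, since all the ratios involved are rational in $k$ as well.
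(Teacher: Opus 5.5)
Your route is the paper's route. You expand $\alpha_{n,j}^{(k)}=\sum_{s=1}^{k}\tau_{n,j}^{(k)}(s)$ via Lemma~\ref{lem:alpha-unified-sum} and show the four-term combination $\mathcal E_{n,j}^{(k)}(s)$ is a first difference $G(s+1)-G(s)$ with $G=R\,\tau_{n,j}^{(k)}$. You get $G(1)=0$ from a factor $s-1$ and $G(k+1)=0$ from $\binom{k-1}{s-1}$, and you control the boundary with division-free forms.

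The gap is that the proposal stops exactly where the proof starts: you never produce $R$. ``Run Gosper's algorithm'' is not a substitute, because Gosper's algorithm only decides whether a hypergeometric antidifference exists. A definite sum can vanish without its summand being Gosper-summable, so nothing in your argument shows the search succeeds. Your suggested ansatz is also too small. Since $G(1)=0$ forces $G(s)=\sum_{t<s}\mathcal E_{n,j}^{(k)}(t)$, there is no freedom, and the certificate the paper exhibits is
\[
R_{n,j}^{(k)}(s)=\frac{(s-1)(s-j-k)(2n+k-2j+2s)}{(j-s+1)(n-j+s+k)}.
\]
This is a cubic numerator over a product of two linear factors, neither of which is $(n-j+s+1)$ or $(j-s+k)$. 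A numerator of degree at most two over a single linear factor cannot reproduce it for general $k$.

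The boundary treatment also needs to be made precise rather than promised. Set $x=n-j+s$, $y=j-s$ and $\eta_s=(-1)^s\frac{k}{(k-1)!}\binom{k-1}{s-1}\frac{1}{(n+s)_{k+1}}$. The paper writes
\[
\tau_{n,j}^{(k)}(s)=\eta_s(x+1)_k(y+1)_{k-1},\qquad G(s)=-\eta_s(s-1)(2x+k)(x+1)_{k-1}(y+2)_{k-1}.
\]
It expresses the three shifted $\tau$'s and $G(s+1)$ in the same form and pulls out $\eta_s(x+2)_{k-2}(y+2)_{k-2}$ (this needs $k\geq2$; $k=1$ is a one-term check by hand). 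The identity $\mathcal E(s)=G(s+1)-G(s)$ then becomes an explicit identity verified without dividing by $x$, $y$, $x+1$, $y+1$, $k-s$ or the pulled-out products.

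These rising-factorial forms reproduce the zero-binomial convention in all four shifted terms as long as $x,y\geq 1-k$. That is exactly what $n\geq\max\{0,j-k\}$, together with $1\leq s\leq k$ and $j\geq1$, guarantees. This, not ``keeping the terms in the regime of the lemma'' (the lemma holds for all $n\geq0$), is the role of the hypothesis. Supplying $R$ and this division-free verification would turn your outline into the paper's proof.
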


\begin{proof}
We first treat the case $k=1$.  By \eqref{eq:alpha-unified-sum}, only $s=1$
contributes, and
\[
\alpha_{n,j}^{(1)}
=
-\frac{n-j+2}{(n+1)(n+2)}.
\]
Hence, we have 
\[
\alpha_{n+1,j+1}^{(1)}
=
-\frac{n-j+2}{(n+2)(n+3)},\qquad
\alpha_{n,j+1}^{(1)}
=
-\frac{n-j+1}{(n+1)(n+2)},
\]
and
\[
\alpha_{n+1,j}^{(1)}
=
-\frac{n-j+3}{(n+2)(n+3)}.
\]
Substitution into \eqref{eq:adjacent-alpha} shows that both sides are equal to
\[
-\frac{(n-j+2)(n+j+3)}{(n+2)(n+3)}.
\]
Thus \eqref{eq:adjacent-alpha} holds for $k=1$.

We now assume $k\ge2$.  Denote the $s$-th summand in \eqref{eq:alpha-unified-sum}
by $\tau_{n,j}^{(k)}(s)$, so that $\alpha_{n,j}^{(k)}
=\sum_{s=1}^{k}\tau_{n,j}^{(k)}(s)$.

Whenever the corresponding binomial coefficients are nonzero,
\[
\begin{aligned}
\tau_{n,j}^{(k)}(s)
={}&
(-1)^s k
\frac{
(n+s-1)!(n-j+s+k)!(j-s+k-1)!
}{
(s-1)!(k-s)!
}
\\
&\times
\frac{1}{
(n+k+s)!(n-j+s)!(j-s)!
}.
\end{aligned}
\]

Set
\[
\begin{aligned}
\mathcal E_{n,j}^{(k)}(s)
={}&
(n+k+j+2)\tau_{n+1,j+1}^{(k)}(s)
-(n-j+k+1)\tau_{n,j+1}^{(k)}(s)
\\
&-(n+k+j)\tau_{n,j}^{(k)}(s)
+(n-j+k+1)\tau_{n+1,j}^{(k)}(s).
\end{aligned}
\]
We shall show that $\mathcal E_{n,j}^{(k)}(s)$ is a first difference.

For the interior indices, direct division of the factorial expressions gives
\begin{equation}
\label{eq:ratio-1}
\begin{aligned}
\frac{\tau_{n+1,j+1}^{(k)}(s)}
     {\tau_{n,j}^{(k)}(s)}
&=
\frac{(n+s)(j-s+k)}
     {(n+k+s+1)(j-s+1)},\\
\frac{\tau_{n,j+1}^{(k)}(s)}
     {\tau_{n,j}^{(k)}(s)}
&=
\frac{(j-s+k)(n-j+s)}
     {(j-s+1)(n-j+s+k)},\\
\frac{\tau_{n+1,j}^{(k)}(s)}
     {\tau_{n,j}^{(k)}(s)}
&=
\frac{(n+s)(n-j+s+k+1)}
     {(n+k+s+1)(n-j+s+1)}.
\end{aligned}
\end{equation}
Moreover,
\begin{equation}
\label{eq:ratio-s}
\frac{\tau_{n,j}^{(k)}(s+1)}
     {\tau_{n,j}^{(k)}(s)}
=
-
\frac{
(n+s)(n-j+s+k+1)(k-s)(j-s)
}{
s(n+k+s+1)(n-j+s+1)(j-s+k-1)
}.
\end{equation}

Define
\[
R_{n,j}^{(k)}(s)
=
\frac{
(s-1)(s-j-k)(2n+k-2j+2s)
}{
(j-s+1)(n-j+s+k)
}.
\]
Substituting \eqref{eq:ratio-1}--\eqref{eq:ratio-s} into the definition of
$\mathcal E_{n,j}^{(k)}(s)$ and simplifying yields
\begin{equation}
\label{eq:E-ratio}
\frac{\mathcal E_{n,j}^{(k)}(s)}
     {\tau_{n,j}^{(k)}(s)}
=
R_{n,j}^{(k)}(s+1)
\frac{\tau_{n,j}^{(k)}(s+1)}
     {\tau_{n,j}^{(k)}(s)}
-
R_{n,j}^{(k)}(s)
\end{equation}
on the common interior support.

For completeness, if $x=n-j+s, y=j-s$,
then the four summands in $\mathcal E_{n,j}^{(k)}(s)$ have common
support for $x\ge1$, $y\ge0$, while the quotient calculation above
requires $x\ge1, y\ge1,  1\le s\le k-1$.

We now remove these interior restrictions.  Define
\begin{equation*}
\label{eq:telescoping-certificate}
\begin{aligned}
G_{n,j}^{(k)}(s)
={}&
(-1)^{s+1}
\frac{k(s-1)}{(k-1)!}
\binom{k-1}{s-1}
\bigl(2(n+s-j)+k\bigr)
\\
&\times
\frac{
(n-j+s+1)_{k-1}
(j-s+2)_{k-1}
}{
(n+s)_{k+1}
},
\qquad 1\le s\le k+1.
\end{aligned}
\end{equation*}

On the common interior support, direct cancellation gives $G_{n,j}^{(k)}(s)
=\tau_{n,j}^{(k)}(s)R_{n,j}^{(k)}(s)$. Hence \eqref{eq:E-ratio} implies
$\mathcal E_{n,j}^{(k)}(s)
=G_{n,j}^{(k)}(s+1)-G_{n,j}^{(k)}(s)$ there.

It remains only to justify the boundary indices.  Put
\[
x=n-j+s,\qquad
y=j-s,\qquad
F=n+s,\qquad
J=n+k+s+1,
\]
and
\[
\eta_s
=
(-1)^s
\frac{k}{(k-1)!}
\binom{k-1}{s-1}
\frac1{(n+s)_{k+1}}.
\]
Then the binomial/rising-factorial form of \eqref{eq:alpha-unified-sum} gives $\tau_{n,j}^{(k)}(s)
=\eta_s (x+1)_k(y+1)_{k-1}$,
together with
\[
\begin{aligned}
\tau_{n+1,j+1}^{(k)}(s)
&=
\eta_s\frac{F}{J}(x+1)_k(y+2)_{k-1},\,\,\,\,\,\,\,\,\,\,
\tau_{n,j+1}^{(k)}(s)
&=
\eta_s(x)_k(y+2)_{k-1},\\
\tau_{n+1,j}^{(k)}(s)
&=
\eta_s\frac{F}{J}(x+2)_k(y+1)_{k-1}.
\end{aligned}
\]
Also, we have
\[
G_{n,j}^{(k)}(s)
=
-\eta_s(s-1)(2x+k)
(x+1)_{k-1}(y+2)_{k-1},
\]
and
\[
G_{n,j}^{(k)}(s+1)
=
\eta_s\frac{F}{J}(k-s)(2x+k+2)
(x+2)_{k-1}(y+1)_{k-1}.
\]

Let $U=(x+2)_{k-2},
V=(y+2)_{k-2}$.
Using the preceding finite-product identities, both sides can be written
without division as
\[
\mathcal E_{n,j}^{(k)}(s)=\eta_sUV\,\mathcal P,
\qquad
G_{n,j}^{(k)}(s+1)-G_{n,j}^{(k)}(s)
=\eta_sUV\,\mathcal Q.
\]
Thus it is enough to verify the algebraic identity $\mathcal P=\mathcal Q$, where 
\[
\begin{aligned}
\mathcal P
={}&
\frac{F}{J}(F+y+k+2)(x+1)(x+k)(y+k)-(x-s+k+1)x(x+1)(y+k)
\\
&-(F+y+k)(x+1)(x+k)(y+1)
+\frac{F}{J}(x-s+k+1)(x+k)(x+k+1)(y+1),
\end{aligned}
\]
and
\[
\begin{aligned}
\mathcal Q
={}&
\frac{F}{J}(k-s)(2x+k+2)(x+k)(y+1)+(s-1)(2x+k)(x+1)(y+k).
\end{aligned}
\]
Notice that no division by $U$ or $V$ is involved, which is essential at
the boundary where either factor may vanish. 

A direct grouping gives
\[
\mathcal P-\mathcal Q
=
(x+1)(y+k)\mathcal A
+
(x+k)(y+1)\mathcal B,
\]
where
\[
\mathcal A
=
\frac{(x+k)(y+1)(2F+k+1)}{J},
\qquad
\mathcal B
=
-\frac{(y+k)(x+1)(2F+k+1)}{J}.
\]
Consequently, $\mathcal P-\mathcal Q=0$.

Since $n\geq \max\{0,j-k\}, j\geq1, 1\leq s\leq k$, we have $x=n-j+s\geq1-k,
y=j-s\geq1-k$. Thus all admissible boundary indices lie in the finite ranges where
the vanishing rising-factorial factors agree with the zero-binomial
convention used above. Moreover,
\[
J=n+k+s+1>0,
\qquad
(n+s)_{k+1}>0.
\]
Since the identity $\mathcal P=\mathcal Q$ was obtained without
dividing by any of $x,\ y,\ x+1,\ y+1,\ k-s,\ U,\ V$,
it remains valid at all admissible boundary indices, including the
terminal case $s=k$.


We have therefore proved, for every $1\le s\le k$,
\begin{equation*}
\label{eq:telescoping-identity}
\mathcal E_{n,j}^{(k)}(s)
=
G_{n,j}^{(k)}(s+1)-G_{n,j}^{(k)}(s).
\end{equation*}

Finally, $G_{n,j}^{(k)}(1)=0$ because of the factor $s-1$, and $G_{n,j}^{(k)}(k+1)=0$ because $\binom{k-1}{k}=0$.

Therefore,
\[
\begin{aligned}
\sum_{s=1}^{k}\mathcal E_{n,j}^{(k)}(s)
&=
\sum_{s=1}^{k}
\left(
G_{n,j}^{(k)}(s+1)-G_{n,j}^{(k)}(s)
\right)=
G_{n,j}^{(k)}(k+1)-G_{n,j}^{(k)}(1)
=0.
\end{aligned}
\]
Since $\alpha_{n,j}^{(k)}
=\sum_{s=1}^{k}\tau_{n,j}^{(k)}(s)$, the definition of $\mathcal E_{n,j}^{(k)}(s)$ now gives
\[
\begin{aligned}
0
={}&
(n+k+j+2)\alpha_{n+1,j+1}^{(k)}
-(n-j+k+1)\alpha_{n,j+1}^{(k)}
-(n+k+j)\alpha_{n,j}^{(k)}
+(n-j+k+1)\alpha_{n+1,j}^{(k)},
\end{aligned}
\]
which is equivalent to \eqref{eq:adjacent-alpha}.
\end{proof}

We next record the value of the first coefficient in each column.

\begin{lemma}
\label{lem:alpha-zero-j}
For $1\leq j\leq k$,
\begin{equation*}
\label{eq:alpha-zero-j}
\alpha_{0,j}^{(k)}
=
(-1)^j
\frac{(k!)^2}{
(k-j)!(k+j)!
}.
\end{equation*}
Consequently, for $1\leq j<k$,
\begin{equation}
\label{eq:alpha-zero-ratio}
\frac{
\alpha_{0,j+1}^{(k)}
}{
\alpha_{0,j}^{(k)}
}
=
-\frac{k-j}{k+j+1}.
\end{equation}
\end{lemma}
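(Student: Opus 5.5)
The quickest route avoids the finite-sum formula of Lemma~\ref{lem:alpha-unified-sum} and uses the definition of $\alpha_{n,j}^{(k)}$ at $n=0$ directly. By Lemma~\ref{lem:defect-bases}, $\phi_0=\psi_0=p_k/\|p_k\|$. Therefore
\[
\alpha_{0,j}^{(k)}=\frac{\langle w^jp_k,z^jp_k\rangle}{\|p_k\|^2}=\frac{\gamma_j}{\binom{2k}{k}},
\]
where I use $\|p_k\|^2=\binom{2k}{k}$ and the value $\gamma_j=\langle p_kw^j,p_kz^j\rangle$ computed at the start of Section~\ref{sec3}. For $1\le j\le k$ that computation (via the Vandermonde identity) gives $\gamma_j=(-1)^j\binom{2k}{k-j}$. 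It remains to simplify the quotient:
\[
\frac{\binom{2k}{k-j}}{\binom{2k}{k}}=\frac{(2k)!}{(k-j)!(k+j)!}\cdot\frac{(k!)^2}{(2k)!}=\frac{(k!)^2}{(k-j)!(k+j)!},
\]
which is the claimed closed form.

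As a consistency check, I would also compare with Lemma~\ref{lem:alpha-unified-sum} at $n=0$. There the effective range $\max\{1,j\}\le s\le\min\{k,j\}$ collapses to the single index $s=j$. The surviving term is
\[
(-1)^jk\binom{k-1}{j-1}\frac{(1)_{j-1}}{(k+1)_j}\binom{k}{k}\binom{k-1}{k-1}=(-1)^jk\frac{(k-1)!}{(j-1)!(k-j)!}\cdot\frac{(j-1)!\,k!}{(k+j)!},
\]
and this equals $(-1)^j(k!)^2/((k-j)!(k+j)!)$. The two computations agree.

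For the ratio \eqref{eq:alpha-zero-ratio} with $1\le j<k$, both $j$ and $j+1$ lie in $\{1,\dots,k\}$, so the closed form applies to both. Taking the quotient gives
\[
\frac{\alpha_{0,j+1}^{(k)}}{\alpha_{0,j}^{(k)}}=-\frac{(k-j)!(k+j)!}{(k-j-1)!(k+j+1)!}=-\frac{k-j}{k+j+1}.
\]
The only point needing care is that the denominator $\alpha_{0,j}^{(k)}$ is nonzero for $j\le k$, which is clear from the closed form. No step here poses a real obstacle; the whole argument is bookkeeping with factorials.
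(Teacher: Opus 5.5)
Your proof is correct, but your main argument differs from the paper's. The paper puts $n=0$ into the finite-sum formula of Lemma~\ref{lem:alpha-unified-sum}, notes that only $s=j$ survives, and simplifies. That is exactly the computation you present as a consistency check.

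Your primary argument instead works from the definition. Since Lemma~\ref{lem:defect-bases} gives $\phi_0=\psi_0=p_k/\|p_k\|$, you get $\alpha_{0,j}^{(k)}=\gamma_j/\gamma_0=(-1)^j\binom{2k}{k-j}/\binom{2k}{k}$. This uses nothing beyond the Vandermonde evaluation of the Toeplitz coefficients at the start of Section~\ref{sec3}.

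The direct route is more elementary. It does not depend on the cofactor formulas or the Pfaff--Saalsch\"utz evaluation behind Lemma~\ref{lem:alpha-unified-sum}. Your agreement check therefore independently confirms that lemma at the boundary $n=0$.

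The paper's route has a different merit: it stays inside the unified formula from which Theorem~\ref{thm:adjacent-relation} is derived. So the boundary relation $y_0=-\beta_{k,j}x_0$ in Case~2 of Theorem~\ref{thm:finite-section} visibly comes from the same source as the recurrence. Both expressions compute the same inner products, so either route serves equally well there. The ratio computation is identical in both.
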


\begin{proof}
Put $n=0$ in \eqref{eq:alpha-unified-sum}. The factor
$\binom{-j+s+k}{k}$
can be nonzero only if $s\geq j$, whereas $\binom{j-s+k-1}{k-1}$ can be nonzero only if $s\leq j$. Hence only $s=j$ contributes.
Substituting $s=j$ into \eqref{eq:alpha-unified-sum} gives
\[
\alpha_{0,j}^{(k)}
=
(-1)^j
\frac{(k!)^2}{
(k-j)!(k+j)!
}.
\]
Taking the quotient of two consecutive values yields
\eqref{eq:alpha-zero-ratio}.
\end{proof}

\begin{remark}
For $k=2$, the formulas above recover the previously known numerical
invariants after taking absolute squares. We note that the individual
inner products $\alpha_{n,j}^{(k)}$ may depend on the phase
normalization of the homogeneous defect-space bases, whereas
$\Sigma_j(M_k)$ does not.
\end{remark}

Combining Corollary~\ref{cor:lower-support} with the boundary values in Lemma~\ref{lem:alpha-zero-j} gives
the exact lower support.

\begin{corollary}
\label{cor:exact-support}
For every $j\geq1$, we have $\min\{n\geq0:\alpha_{n,j}^{(k)}\neq0\}=
\max\{0,j-k\}$. More precisely, if $1\leq j\leq k$, then
\[
\alpha_{0,j}^{(k)}
=
(-1)^j
\frac{(k!)^2}{(k-j)!(k+j)!}
\neq0,
\]
whereas if $j > k$, then
\[
\alpha_{j-k,j}^{(k)}
=
(-1)^k k^2
\frac{(j-k+1)_{k-1}^2}
{(j-k+1)_{2k}}
\neq0.
\]
\end{corollary}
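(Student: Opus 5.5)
The plan is to combine Corollary~\ref{cor:lower-support} (vanishing for $n<j-k$) with an explicit evaluation of the finite sum \eqref{eq:alpha-unified-sum} at the first admissible index $n=\max\{0,j-k\}$. Once these first values are known to be nonzero, the minimum of the support is exactly $\max\{0,j-k\}$.

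\emph{Case $1\le j\le k$.} Here $\max\{0,j-k\}=0$. The value $\alpha_{0,j}^{(k)}$ is already computed in Lemma~\ref{lem:alpha-zero-j}. It is a signed ratio of factorials and is clearly nonzero. Hence the minimum is $0$, and nothing further is needed.

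\emph{Case $j>k$.} Set $n=j-k\ge1$. By Lemma~\ref{lem:alpha-unified-sum}, the effective range is $\max\{1,j-n\}\le s\le\min\{k,j\}$. Since $j-n=k$ and $j>k$, this range collapses to the single index $s=k$. For $s=k$ the factors simplify as follows:
\[
\binom{k-1}{k-1}=1,\qquad \binom{n-j+2k}{k}=\binom{k}{k}=1,\qquad \binom{j-1}{k-1}=\binom{n+k-1}{k-1}.
\]
The remaining factors are
\[
\frac{(n+1)_{k-1}}{(n+k+1)_k}\qquad\text{and}\qquad \frac{1}{\binom{n+k}{k}}.
\]
Therefore
\[
\alpha_{j-k,j}^{(k)}=(-1)^k k\,\frac{(n+1)_{k-1}}{(n+k+1)_k}\cdot\frac{\binom{n+k-1}{k-1}}{\binom{n+k}{k}}.
\]

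It remains to rewrite this in the stated form. First, $\binom{n+k-1}{k-1}/\binom{n+k}{k}=k/(n+k)$. Next, $(n+1)_{k-1}(n+k)=(n+1)_k$ and $(n+1)_k(n+k+1)_k=(n+1)_{2k}$. Multiplying numerator and denominator by $(n+1)_{k-1}$ and $(n+1)_k$ and using these identities gives
\[
\alpha_{j-k,j}^{(k)}=(-1)^k k^2\frac{(n+1)_{k-1}^2}{(n+1)_{2k}},
\]
which is the claimed formula with $n+1=j-k+1$. This value is nonzero because all rising factorials have positive arguments.

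The only point needing care is confirming that the effective range reduces to $s=k$, including the edge $j=k+1$ (where $n=1$); the index bounds above handle this uniformly. Beyond that, the argument is routine rising-factorial bookkeeping, and I expect no real obstacle.
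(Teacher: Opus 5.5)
Your proposal is correct and follows essentially the same route as the paper: cite Lemma~\ref{lem:alpha-zero-j} for $1\le j\le k$, and for $j>k$ observe that at $n=j-k$ only $s=k$ survives in \eqref{eq:alpha-unified-sum}, then combine with Corollary~\ref{cor:lower-support}. Your rising-factorial simplification for $j>k$ checks out and fills in the substitution the paper leaves implicit. Only multiplication of numerator and denominator by $(n+1)_{k-1}$ is actually needed there, not also by $(n+1)_k$.
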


\begin{proof}
The case $1\leq j\leq k$ follows from
Lemma~\ref{lem:alpha-zero-j}. Now let $j > k$ and set $n=j-k$. In the finite-sum formula for $\alpha_{n,j}^{(k)}$, the condition $s\geq j-n=k$
shows that only the term $s=k$ can be nonzero. Substitution gives
\[
\alpha_{j-k,j}^{(k)}
=
(-1)^k k^2
\frac{(j-k+1)_{k-1}^2}
{(j-k+1)_{2k}},
\]
which is nonzero. Together with
Corollary~\ref{cor:lower-support}, the result follows.
\end{proof}

The following finite-section contraction theorem is the main
structural result of this section.

\begin{theorem}
\label{thm:finite-section}
For $k\geq1$, $j\geq1$, and $N\geq\max\{0,j-k\}$,
we have
\begin{equation}
\label{eq:finite-section-monotonicity}
\sum_{n=0}^{N}
\left|\alpha_{n,j}^{(k)}\right|^2
>
\sum_{n=0}^{N}
\left|\alpha_{n,j+1}^{(k)}\right|^2.
\end{equation}
\end{theorem}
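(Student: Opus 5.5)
The plan is to convert the adjacent relation of Theorem~\ref{thm:adjacent-relation} into a finite lower-triangular linear map from the $j$-th column to the $(j+1)$-st column, and then to show that this map is a strict contraction in the Euclidean norm.

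Fix $k,j$, put $n_0=\max\{0,j-k\}$, and write $a_n=\alpha_{n,j}^{(k)}$ and $b_n=\alpha_{n,j+1}^{(k)}$. Corollary~\ref{cor:lower-support} gives $a_n=0$ for $n<n_0$ and $b_n=0$ for $n<\max\{0,j+1-k\}$, so both sums in \eqref{eq:finite-section-monotonicity} start at $n_0$. With
\[
c_n=n+k+j+2,\qquad d_n=n-j+k+1,\qquad e_n=n+k+j,
\]
the relation \eqref{eq:adjacent-alpha} reads
\[
c_nb_{n+1}-d_nb_n=e_na_n-d_na_{n+1},\qquad n\geq n_0 .
\]
In this range $c_n>0$, and $d_n\geq1$ when $j\le k$; in particular $d_n>0$ whenever $a_{n+1}$ or $b_n$ is actually nonzero.

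The initial value is fixed as follows. If $j<k$, then $b_{n_0}=b_0=-\frac{k-j}{k+j+1}a_0$ by \eqref{eq:alpha-zero-ratio}. If $j\geq k$, then $b_{n_0}=0$. Solving the recursion successively gives
\[
(b_{n_0},\dots,b_N)^{T}=T_N\,(a_{n_0},\dots,a_N)^{T},
\]
where $T_N$ is lower triangular. Its entries are explicit products of the ratios $d_m/c_m$ times $e_n$ or $d_n$. The key point is that $b_{n}$ for $n\le N$ involves only $a_{n_0},\dots,a_N$. Theorem~\ref{thm:finite-section} is then equivalent to $\|T_Na\|<\|a\|$ for the particular vector $a$. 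Since $a\neq0$ by Corollary~\ref{cor:exact-support}, it suffices to prove the operator statement that $I-T_N^{T}T_N$ is positive definite.

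To prove this I would look for a discrete energy identity rather than estimate $T_N$ entrywise. Write the relation as
\[
c_nb_{n+1}+d_na_{n+1}=e_na_n+d_nb_n ,
\]
and note that $c_n-d_n=2j+1$ and $e_n-d_n=2j-1$ are independent of $n$. This suggests pairing $(a_n,b_n)$ into a $2\times2$ step.

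I would first try to find positive weights $\lambda_n$ and a nonnegative quadratic "dissipation" $Q_n(a_n,a_{n+1},b_n,b_{n+1})$ such that
\[
\lambda_{n+1}(a_{n+1}^2-b_{n+1}^2)-\lambda_n(a_n^2-b_n^2)\ \text{ or }\ (a_{n+1}^2-b_{n+1}^2)
\]
equals $Q_n$ plus a telescoping term. Ideally the weights would be trivial, so that no reweighting is needed. Summing from $n_0$ to $N-1$ would then express
\[
\sum_{n\le N}(a_n^2-b_n^2)
\]
as the initial term $a_{n_0}^2-b_{n_0}^2$ plus nonnegative terms plus a final boundary term. The initial term is positive in both cases, since $\bigl(\tfrac{k-j}{k+j+1}\bigr)^2<1$ and $b_{n_0}=0$ respectively. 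The final boundary term would have to be shown to be nonnegative, for instance as a square times a positive coefficient.

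If such an exact identity is not found, the fallback is to prove positivity of $I-T_N^{T}T_N$ by induction on $N$. Passing from $N$ to $N+1$ appends one row to $T_N$, and the Schur complement of the new diagonal entry must stay positive. This requires a uniform bound of the form
\[
\frac{e_n}{c_n}<1,\qquad\frac{d_n}{c_n}<1,
\]
together with control of the accumulated off-diagonal mass.

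The main obstacle is precisely this step: finding the right quadratic certificate, or the right Schur-complement inequality, that works uniformly in $k$, $j$ and $N$. The first thing I would try is the ansatz $Q_n=\mu_n(\text{linear form in } a_n,a_{n+1},b_n,b_{n+1})^2$, with the linear form chosen proportional to the two sides of the relation and $\mu_n$ chosen to cancel the cross terms. I would determine $\mu_n$ by matching coefficients, using the constant differences $c_n-d_n$ and $e_n-d_n$, and check small $k$ (namely $k=1,2$, where $a_n$ is explicit) to calibrate the ansatz before the general algebra.
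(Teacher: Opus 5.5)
\textbf{Verdict: there is a genuine gap.} Your reduction agrees with the paper's: encode \eqref{eq:adjacent-alpha} together with the initial value ($b_{n_0}=0$ for $j\ge k$, $b_0=-\frac{k-j}{k+j+1}a_0$ for $j<k$) as a linear map $b=T_Na$, use $a\neq0$ (Corollary~\ref{cor:exact-support}), and reduce everything to $\|T_N\|<1$. But that inequality is the entire content of the theorem, and you do not prove it. The energy identity is only an ansatz. The Schur-complement fallback cannot work with the ingredients you list, namely $e_n/c_n<1$ and $d_n/c_n<1$ plus control of the off-diagonal mass.

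To see why a crude estimate must fail, look at the structure of $T_N$ for large indices. Its diagonal entries are $-d_{n-1}/c_{n-1}\to-1$. Its entries below the diagonal are positive, of size roughly $\frac{4j}{m}\bigl(\frac{m}{n}\bigr)^{2j+1}$, so the absolute row sums are roughly $1+\frac{4j}{2j+1}$. An alternating vector $a_n=(-1)^n$ is mapped to approximately $-a$, so $\|T_N\|\to1$ as $N\to\infty$. Hence no estimate that discards the sign cancellation between the diagonal and the lower-triangular part can give a norm below $1$. You need an exact certificate, and the proposal does not produce one.

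The missing idea is not to invert the recursion at all. Write the relations as $\mathcal L b=\mathcal R a$, with $\mathcal L$ and $\mathcal R$ lower bidiagonal. Include the initial condition as the first row when $j<k$, and drop the trivially zero $b_{n_0}$ when $j\ge k$. Then $T_N=\mathcal L^{-1}\mathcal R$ and
\[
I-T_NT_N^{*}=\mathcal L^{-1}\bigl(\mathcal L\mathcal L^{*}-\mathcal R\mathcal R^{*}\bigr)\mathcal L^{-*}.
\]
The natural object is therefore $I-T_NT_N^{*}$, not your $I-T_N^{T}T_N$. The two are equivalent for the norm, but only the former factors through the bidiagonal matrices. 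The local quadratic identity you were looking for lives on the adjoint side: $\|\mathcal L^{*}u\|^2-\|\mathcal R^{*}u\|^2=\langle\mathcal D u,u\rangle$, where $\mathcal D=\mathcal L\mathcal L^{*}-\mathcal R\mathcal R^{*}$ is tridiagonal.

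Your observation that $c_n-d_n=2j+1$ and $e_n-d_n=2j-1$ are constant is exactly what makes $\mathcal D$ explicit:
\begin{itemize}
\item The row of $\mathcal D$ for the relation at $n$ has diagonal $c_n^2-e_n^2=4(n+k+j+1)$ and off-diagonal entries $-(2n+2k+1)$ and $-(2n+2k+3)$.
\item Interior rows are therefore strictly diagonally dominant with margin exactly $4j$, and the last row has margin $2N+2k+4j+1$.
\item For $j\ge k$, the first row has $8j+3$ against $2j+3$, so Gershgorin gives $\mathcal D\succ0$.
\item For $j<k$, the initial condition produces the corner entry $1-\beta_{k,j}^2=\frac{(2j+1)(2k+1)}{(k+j+1)^2}$ with neighbour $-\frac{2k+1}{k+j+1}$. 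This row is not dominant. Rescaling the first coordinate by $\frac{2(k+j+1)}{2j+1}$ makes $V\mathcal DV$ strictly diagonally dominant.
\end{itemize}
This gives $\|T_N\|<1$ for all $k,j,N$, with no weights, storage functions, or Schur-complement induction.
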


\begin{proof}
We divide the proof into two cases.

\medskip
\noindent
\textbf{Case 1: $j\geq k$.}

Put $q=j-k\geq0$.
By Corollary~\ref{cor:lower-support}, we have
\[
\alpha_{n,j}^{(k)}=0\quad(n<q),
\qquad
\alpha_{n,j+1}^{(k)}=0\quad(n<q+1).
\]
For $m\geq1$, define
\[
x_m
=
\alpha_{q+m-1,j}^{(k)},
\qquad
y_m
=
\alpha_{q+m,j+1}^{(k)},
\]
and put $y_0=0$.

Taking $n=q+m-1$
in \eqref{eq:adjacent-alpha}, we have
$n-j+k+1=m$,
and
$n+k+j=2j+m-1$.

Hence
\begin{equation}
\label{eq:tail-recurrence}
(2j+m+1)y_m
=
my_{m-1}
+(2j+m-1)x_m
-mx_{m+1}.
\end{equation}

Fix $N\geq q$ and put $M=N-q$. If $M=0$, then we have
\[
\sum_{n=0}^{N}|\alpha_{n,j+1}^{(k)}|^2=0,\,\,\,\,\,\,\,\,\,\,\,\sum_{n=0}^{N}|\alpha_{n,j}^{(k)}|^2
=|\alpha_{q,j}^{(k)}|^2>0,
\]
so the result is immediate.

Assume now $M\geq1$, and set
\[
x=
\begin{pmatrix}
x_1\\
x_2\\
\vdots\\
x_{M+1}
\end{pmatrix},
\qquad
y=
\begin{pmatrix}
y_1\\
y_2\\
\vdots\\
y_M
\end{pmatrix}.
\]
The recurrence \eqref{eq:tail-recurrence} can be written as
\begin{equation*}
\label{eq:tail-matrix-equation}
\mathcal L_M(j)y
=
\mathcal R_M(j)x,
\end{equation*}
where
\[
\mathcal L_M(j)
=
\begin{pmatrix}
2j+2 &        &        &        \\
-2   & 2j+3  &        &        \\
     & -3    & 2j+4  &        \\
     &       & \ddots & \ddots \\
     &       &        & -M & 2j+M+1
\end{pmatrix}_{M\times M},
\]
and
\[
\mathcal R_M(j)
=
\begin{pmatrix}
2j   & -1   &      &      &      \\
     & 2j+1& -2   &      &      \\
     &      & 2j+2& -3   &      \\
     &      &      &\ddots&\ddots\\
     &      &      &      &2j+M-1&-M
\end{pmatrix}_{M\times(M+1)}.
\]
Since $\mathcal L_M(j)$ is lower bidiagonal with strictly positive diagonal entries, it is invertible. Define
\[
T_M(j)
=
\mathcal L_M(j)^{-1}\mathcal R_M(j).
\]
Then
$y=T_M(j)x$.

We claim that $T_M(j)$ is a strict contraction. Consider
\[
\mathcal D_M(j)
=
\mathcal L_M(j)\mathcal L_M(j)^*
-
\mathcal R_M(j)\mathcal R_M(j)^*.
\]
A direct multiplication gives
$(\mathcal D_M(j))_{1,1}=
8j+3$, and, for $2\leq m\leq M$, $(\mathcal D_M(j))_{m,m}=
4(2j+m)$.

The only nonzero off-diagonal entries are
\[
(\mathcal D_M(j))_{m,m+1}
=
(\mathcal D_M(j))_{m+1,m}
=-(2j+2m+1),
\qquad
1\leq m<M.
\]

If $M\geq2$, the strict diagonal-dominance margin in the first row is
\[
(8j+3)-(2j+3)=6j>0.
\]
For every interior row $2\leq m\leq M-1$,
\[
\begin{aligned}
4(2j+m)
&-(2j+2m-1)
-(2j+2m+1)=4j>0.
\end{aligned}
\]
For the last row,
\[
\begin{aligned}
4(2j+M)-(2j+2M-1)
&=
6j+2M+1>0.
\end{aligned}
\]
Hence every row is strictly diagonally dominant. When $M=1$, the matrix
$\mathcal D_M$ consists only of the positive entry $8j+3$.
Hence $\mathcal D_M(j)$ is a real symmetric strictly diagonally dominant matrix
with positive diagonal entries. By Gershgorin's theorem, all eigenvalues of \(\mathcal D_M(j)\) are strictly positive. Therefore, we obtain that \(\mathcal D_M(j) \succ 0\). Here, \(\mathcal D_M(j) \succ 0\) means that \(\mathcal D_M(j)\) is positive definite.

Now we have
\[
\begin{aligned}
I_M-T_M(j)T_M(j)^*
&=
\mathcal L_M(j)^{-1}
\mathcal D_M(j)
\mathcal L_M(j)^{-*} \succ 0.
\end{aligned}
\]
Thus $\|T_M(j)\|<1$.
Moreover, Corollary~\ref{cor:exact-support} gives
\[
x_1
=
\alpha_{q,j}^{(k)}
=
(-1)^k k^2
\frac{(q+1)_{k-1}^2}{(q+1)_{2k}}
\neq0.
\]
Hence $x\neq0$. Since $y=T_M(j)x$
and $\|T_M(j)\|<1$, we obtain
$\|y\|^2<\|x\|^2$.

Using the support conditions, we have
\[
\|x\|^2
=
\sum_{n=0}^{N}|\alpha_{n,j}|^2,\,\,\,\,\,\,\,\,\,\,\,\,\,\,\|y\|^2
=
\sum_{n=0}^{N}|\alpha_{n,j+1}|^2.
\]

This proves \eqref{eq:finite-section-monotonicity} when $j\geq k$.

\medskip
\noindent
\textbf{Case 2: $1\leq j<k$.}

Set $x_n=\alpha_{n,j}^{(k)},
y_n=\alpha_{n,j+1}^{(k)}$. By Lemma~\ref{lem:alpha-zero-j}, we have
\[
y_0
=
-\beta_{k,j}x_0,
\qquad
\beta_{k,j}
=
\frac{k-j}{k+j+1}.
\]
Since $1\leq j<k$, then
$0<\beta_{k,j}<1$.

For $n\geq0$, the adjacent relation
\eqref{eq:adjacent-alpha} becomes
\begin{equation}
\label{eq:front-recurrence}
\begin{aligned}
&(n+k+j+2)y_{n+1}
-(n-j+k+1)y_n=
(n+k+j)x_n
-(n-j+k+1)x_{n+1}.
\end{aligned}
\end{equation}

If $N=0$, then $|y_0|
=\beta_{k,j}|x_0|<
|x_0|$,
and the assertion follows.

Assume $N\geq1$. Define
\[
x=
\begin{pmatrix}
x_0\\
x_1\\
\vdots\\
x_N
\end{pmatrix},
\qquad
y=
\begin{pmatrix}
y_0\\
y_1\\
\vdots\\
y_N
\end{pmatrix}.
\]
The boundary relation and \eqref{eq:front-recurrence} can be written as
$\widehat{\mathcal L}_N(k,j)y
=
\widehat{\mathcal R}_N(k,j)x$,
where
\[
\widehat{\mathcal L}_N(k,j)
=
\begin{pmatrix}
1 & 0 & 0 & \cdots & 0\\
-(k-j+1)
& k+j+2
& 0
& \cdots
& 0\\
0
& -(k-j+2)
& k+j+3
& \ddots
& \vdots\\
\vdots
& \ddots
& \ddots
& \ddots
& 0\\
0
& \cdots
& 0
& -(k-j+N)
& k+j+N+1
\end{pmatrix},
\]
and
\[
\widehat{\mathcal R}_N(k,j)
=
\begin{pmatrix}
-\beta_{k,j} & 0 & 0 & \cdots & 0\\
k+j
& -(k-j+1)
& 0
& \cdots
& 0\\
0
& k+j+1
& -(k-j+2)
& \ddots
& \vdots\\
\vdots
& \ddots
& \ddots
& \ddots
& 0\\
0
& \cdots
& 0
& k+j+N-1
& -(k-j+N)
\end{pmatrix}.
\]

Let
\[
\widehat{\mathcal D}_N(k,j)
=
\widehat{\mathcal L}_N(k,j)
\widehat{\mathcal L}_N(k,j)^*
-
\widehat{\mathcal R}_N(k,j)
\widehat{\mathcal R}_N(k,j)^*.
\]
A direct multiplication gives
\begin{equation*}
\label{eq:D-front-00}
(\widehat{\mathcal D}_N(k,j))_{0,0}
=
1-\beta_{k,j}^2
=
\frac{(2j+1)(2k+1)}
{(k+j+1)^2}>0,
\end{equation*}
and
\begin{equation*}
\label{eq:D-front-01}
(\widehat{\mathcal D}_N(k,j))_{0,1}
=
(\widehat{\mathcal D}_N(k,j))_{1,0}
=
-\frac{2k+1}{k+j+1}.
\end{equation*}
For $1\leq r\leq N$,
$(\widehat{\mathcal D}_N(k,j))_{r,r}
=
4(k+j+r)$, and, for $1\leq r<N$, $(\widehat{\mathcal D}_N(k,j))_{r,r+1}
=(\widehat{\mathcal D}_N(k,j))_{r+1,r}
=-(2k+2r+1)$. All other entries vanish.

The first row is not necessarily diagonally dominant, so we use a
positive diagonal scaling. Define
\[
v=
\begin{pmatrix}
v_0\\
v_1\\
\vdots\\
v_N
\end{pmatrix}=
\begin{pmatrix}
\dfrac{2(k+j+1)}{2j+1}\\[2mm]
1\\
\vdots\\
1
\end{pmatrix}.
\]
Then
\[
(\widehat{\mathcal D}_N(k,j)v)_0
=
\frac{2k+1}{k+j+1}>0.
\]
If $N\geq2$, then
\[
\begin{aligned}
(2j+1)(\widehat{\mathcal D}_N(k,j)v)_1
={}&
2k(2j-1)
+8j^2+6j-1>0,
\end{aligned}
\]
because $j\geq1$. 

For every interior row $2\leq r<N$,
\[
\begin{aligned}
(\widehat{\mathcal D}_N(k,j)v)_r
&=
4(k+j+r)
-(2k+2r-1)
-(2k+2r+1)=4j>0.
\end{aligned}
\]
For $N\geq2$, the last row satisfies
\[
\begin{aligned}
(\widehat{\mathcal D}_N(k,j)v)_N=
4(k+j+N)-(2k+2N-1)=2k+4j+2N+1>0.
\end{aligned}
\]
If $N=1$, then
\[
(\widehat{\mathcal D}_N(k,j)v)_1
=
4(k+j+1)
-
\frac{2(2k+1)}{2j+1}
>0.
\]

Therefore, all components of the vector $\widehat{\mathcal D}_N(k,j) v$ are strictly positive.


Let $V=\operatorname{diag}(v_0,\ldots,v_N)$.
Since $\widehat{\mathcal D}_N(k,j)$ is real symmetric and its off-diagonal entries
are nonpositive, for every $0\leq r\leq N$ we have
\[
(V\widehat{\mathcal D}_N(k,j)V)_{r,r}
-
\sum_{s\neq r}
\left|(V\widehat{\mathcal D}_N(k,j)V)_{r,s}\right|
=
v_r(\widehat{\mathcal D}_N(k,j)v)_r.
\]
Hence $\widehat{\mathcal D}_N(k,j)v>0$ implies that
$V\widehat{\mathcal D}_N(k,j)V$ is strictly diagonally dominant with positive
diagonal entries. Therefore
$\widehat{\mathcal D}_N(k,j)V \succ 0$, and consequently
$\widehat{\mathcal D}_N(k,j) \succ 0$.


Since $\widehat L_N(k,j)$ is lower triangular with strictly positive
diagonal entries, it is invertible. Hence we may define
\[
\widehat T_N(k,j)
=
\widehat{\mathcal L}_N(k,j)^{-1}\widehat {\mathcal R}_N(k,j).
\]
Then
\[
I-\widehat T_N(k,j)\widehat T_N(k,j)^*
=
\widehat{\mathcal L}_N(k,j)^{-1}
\widehat{\mathcal D}_N(k,j)
\widehat{\mathcal L}_N(k,j)^{-*}
\succ 0,
\]
so
\[
\|\widehat T_N(k,j)\|<1.
\]
Since $y=\widehat T_N(k,j)x$
and $x\neq0$, we obtain
\[
\sum_{n=0}^{N}|y_n|^2
<
\sum_{n=0}^{N}|x_n|^2.
\]

This proves
\eqref{eq:finite-section-monotonicity} for $1\leq j<k$ and completes
the proof.
\end{proof}

\begin{remark}
\label{rem:not-termwise}
Theorem~\ref{thm:finite-section} is a finite-section norm inequality. It does not assert that $|\alpha_{n,j}|
\geq
|\alpha_{n,j+1}|$
for every fixed $n$. In fact, such a termwise inequality is false in
general. The monotonicity is produced by the contraction of the entire
finite coefficient vector.
\end{remark}

To pass from finite sections to the infinite numerical invariants while
preserving strictness, we need the large-$n$ behavior of
$\alpha_{n,j}^{(k)}$.

\begin{lemma}
\label{lem:alpha-asymptotic}
For every fixed $k\geq1$ and $j\geq1$,
\begin{equation}
\label{eq:alpha-asymptotic}
\alpha_{n,j}^{(k)}
=
-\frac{k}{n+k}
+
\frac{kj}{(n+k)^2}
+
O(n^{-3}),
\qquad n\to\infty.
\end{equation}
Consequently,
\begin{equation}
\label{eq:alpha-square-difference}
|\alpha_{n,j}^{(k)}|^2
-
|\alpha_{n,j+1}^{(k)}|^2
=
\frac{2k^2}{(n+k)^3}
+
O(n^{-4}).
\end{equation}
\end{lemma}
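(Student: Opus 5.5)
For fixed $k$ and $j$, we start from the finite-sum formula \eqref{eq:alpha-unified-sum} of Lemma~\ref{lem:alpha-unified-sum}. For $n\ge j$ every index $1\le s\le\min\{k,j\}$ contributes, and the number of terms is bounded independently of $n$. The plan is to expand each summand $\tau_{n,j}^{(k)}(s)$ in powers of $1/(n+k)$ and keep terms through order $(n+k)^{-2}$. Write $m=n+k$. Then:
\[
\frac{(n+1)_{s-1}}{(n+k+1)_s}=\frac{1}{m}\prod_{i=1}^{s-1}\frac{m-k+i}{m+i}\cdot\frac{m}{m+s}
=O(m^{-1}),
\]
while
\[
\frac{\binom{n-j+s+k}{k}}{\binom{n+k}{k}}=\prod_{i=1}^{k}\frac{m-j+s-k+i}{m-k+i}=1+\frac{k(s-j)}{m}+O(m^{-2}).
\]
So the $s$-th summand is $O(m^{-1})$ for every $s$. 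It looks as though all $s$ contribute at leading order, but the alternating signs should cancel.

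To organize the cancellation, view $\alpha_{n,j}^{(k)}$ as a sum over $s$ of $(-1)^s k\binom{k-1}{s-1}$ times a function that is polynomial in $s$ at each order of $1/m$. Using $\binom{j-s+k-1}{k-1}$ together with the $s$-dependence of the other factors, one would evaluate sums like $\sum_s(-1)^{s}\binom{k-1}{s-1}P(s)$.

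A cleaner route avoids expanding the hypergeometric-type sum by hand and uses the adjacent relation (Theorem~\ref{thm:adjacent-relation}) inductively in $j$. For $j=1$, only $s=1$ contributes, and
\[
\alpha_{n,1}^{(k)}=-\frac{k}{n+k+1}\cdot\frac{\binom{n+k}{k}}{\binom{n+k}{k}}=-\frac{k}{m+1}=-\frac{k}{m}+\frac{k}{m^2}+O(m^{-3}),
\]
which matches \eqref{eq:alpha-asymptotic} for $j=1$. This is also consistent with Lemma~\ref{lem:basic-inner-products} and \eqref{eq:opposite-corner-cofactors}, which give $-\frac{k}{n+1+k}$.

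For the induction step, assume \eqref{eq:alpha-asymptotic} holds for $j$, and suppose a priori that $\alpha_{n,j+1}^{(k)}=c_1/m+c_2/m^2+O(m^{-3})$. This form follows from \eqref{eq:alpha-unified-sum}, since the expression is a rational function of $n$ vanishing at infinity: each summand is a ratio of polynomials in $n$ with denominator degree exceeding numerator degree. Substitute into \eqref{eq:adjacent-alpha}, with $\alpha_{n+1,\cdot}$ expanded via $1/(m+1)=1/m-1/m^2+\dots$, and match coefficients. At order $m^0$ this gives $2c_1=-2k$, so $c_1=-k$. At order $m^{-1}$ it gives
\[
c_2+(k+j+2)c_1\cdot 1-c_1=\ldots,
\]
which is a single linear equation for $c_2$. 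One checks that it yields $c_2=k(j+1)$.

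The main obstacle is bookkeeping at order $m^{-1}$. I would do it carefully by writing $\alpha_{n+1,\cdot}=c_1/m+(c_2-c_1)/m^2+O(m^{-3})$. Multiplying by the linear coefficients $(n+k+j+2)=m+j+2$ and $(n-j+k+1)=m-j+1$ requires the $m^{-2}$ terms, so an $O(m^{-3})$ error in the inductive hypothesis is exactly sufficient. The uniqueness of $c_2$ from the linear equation closes the induction.

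Finally, \eqref{eq:alpha-square-difference} follows by squaring. Write $\alpha_{n,j}^{(k)}=-\frac km+\frac{kj}{m^2}+O(m^{-3})$, so that
\[
|\alpha_{n,j}^{(k)}|^2=\frac{k^2}{m^2}-\frac{2k^2 j}{m^3}+O(m^{-4}).
\]
Subtracting the same expression with $j+1$ gives $\frac{2k^2}{m^3}+O(m^{-4})$. The $\alpha$'s are real, so absolute squares equal squares.
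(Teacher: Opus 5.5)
\textbf{There is a genuine gap in the inductive step.} Your base case $j=1$ and the final squaring step are correct. The coefficient matching, however, is off by one order, and the order at which $c_2$ actually appears lies beyond your error bound. With $m=n+k$, the adjacent relation reads
\[
(m+j+2)\alpha_{n+1,j+1}^{(k)}-(m-j+1)\alpha_{n,j+1}^{(k)}=(m+j)\alpha_{n,j}^{(k)}-(m-j+1)\alpha_{n+1,j}^{(k)}.
\]
Note that the shift on the left is $j+2$, not $k+j+2$.
\begin{itemize}
\item At order $m^0$ the left side gives $c_1-c_1=0$ and the right side gives $-k+k=0$. This order is vacuous; it does not yield $2c_1=-2k$.
\item At order $m^{-1}$, the $c_2$ in your left-hand expression is cancelled by the $c_2$ coming from $(m-j+1)\alpha_{n,j+1}^{(k)}$. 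What remains is $2jc_1=-2jk$, i.e.\ $c_1=-k$. So there is no linear equation for $c_2$ at this order.
\item The coefficient $c_2$ first appears at order $m^{-2}$, with factor $2j-1$. At that order a hypothesis of the form $\alpha_{n,j}^{(k)}=-k/m+kj/m^2+O(m^{-3})$ is useless, because $(m+j)\cdot O(m^{-3})=O(m^{-2})$. The same problem affects the a priori remainder of $\alpha_{n,j+1}^{(k)}$.
\end{itemize}
So your claim that an $O(m^{-3})$ error is ``exactly sufficient'' is false, and $c_2=k(j+1)$ is not established.

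\textbf{How to repair it.} Use your rationality observation more strongly. For fixed $j,k$ and $n\ge j$, both $f(m)=\alpha_{n,j+1}^{(k)}$ and $g(m)=\alpha_{n,j}^{(k)}$ are rational in $m$ and vanish at infinity. Hence $f=c_1/m+c_2/m^2+c_3/m^3+O(m^{-4})$ and $g=d_1/m+d_2/m^2+d_3/m^3+O(m^{-4})$. Rewrite the relation as
\[
m\bigl[f(m+1)-f(m)\bigr]+(j+2)f(m+1)-(1-j)f(m)=-m\bigl[g(m+1)-g(m)\bigr]+jg(m)-(1-j)g(m+1).
\]
Since $m\bigl[(m+1)^{-3}-m^{-3}\bigr]=O(m^{-3})$, the coefficients $c_3,d_3$ do not enter through order $m^{-2}$. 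The $O(m^{-4})$ remainders contribute only $O(m^{-3})$. Matching coefficients then gives
\[
2jc_1=2jd_1,\qquad (2j-1)c_2-(j+1)c_1=(2j+1)d_2-jd_1.
\]
With $d_1=-k$ and $d_2=kj$, this yields $c_1=-k$ and $(2j-1)c_2=k(2j-1)(j+1)$, so $c_2=k(j+1)$, and the induction closes.

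\textbf{Comparison with the paper.} Once repaired, your route is a legitimate alternative. The paper instead expands each $F_s(M)$ directly to second order and evaluates $\sum_s C_s=-k$ and $\sum_s sC_s=k[(k-1)j-k]$ by backward-difference identities. That argument needs only the finite-sum formula. Yours avoids the binomial sums but depends on the adjacent relation and on the rational structure in $n$.
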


\begin{proof}
We first treat the case $k=1$ separately. By
Lemma~\ref{lem:alpha-unified-sum}, the sum contains only the term
$s=1$, and hence
\[
\alpha_{n,j}^{(1)}
=
-\frac{n-j+2}{(n+1)(n+2)}.
\]

Put $M=n+1$.
Then
\[
\alpha_{n,j}^{(1)}
=
-\frac{M-j+1}{M(M+1)}
=
-\frac{1}{M}
+
\frac{j}{M(M+1)}.
\]
Since
\[
\frac{1}{M(M+1)}
=
\frac{1}{M^2}
+
O(M^{-3}),
\]
we obtain
\[
\alpha_{n,j}^{(1)}
=
-\frac{1}{M}
+
\frac{j}{M^2}
+
O(M^{-3}).
\]
Since $M=n+1=n+k$ when $k=1$, this is precisely
\[
\alpha_{n,j}^{(1)}
=
-\frac{1}{n+1}
+
\frac{j}{(n+1)^2}
+
O(n^{-3}),
\]
which is the first asserted asymptotic formula for $k=1$.

Moreover,
\[
\begin{aligned}
\left|\alpha_{n,j}^{(1)}\right|^2
-
\left|\alpha_{n,j+1}^{(1)}\right|^2
&=
\frac{(M-j+1)^2-(M-j)^2}
     {M^2(M+1)^2}=
\frac{2M-2j+1}{M^2(M+1)^2}.
\end{aligned}
\]
Therefore
\[
\left|\alpha_{n,j}^{(1)}\right|^2
-
\left|\alpha_{n,j+1}^{(1)}\right|^2
=
\frac{2}{M^3}
+
O(M^{-4})
=
\frac{2}{(n+1)^3}
+
O(n^{-4}),
\]
which is the second asserted asymptotic formula for $k=1$.

For the remainder of the proof, assume $k\geq2$.

Put $M=n+k$.
Rewrite \eqref{eq:alpha-unified-sum} as
\[
\alpha_{n,j}^{(k)}
=
\sum_{s=1}^{k} C_s F_s(M),
\]
where
\[
C_s
=
(-1)^s k
\binom{k-1}{s-1}
\binom{j-s+k-1}{k-1},
\]
and
\[
F_s(M)
=
\frac{
\binom{M-j+s}{k}
}{
\binom{M}{k}
}
\frac{
(M-k+1)_{s-1}
}{
(M+1)_s
}.
\]

For fixed $k,j,s$, we first have
\[
\begin{aligned}
\frac{
\binom{M-j+s}{k}
}{
\binom{M}{k}
}
&=
\prod_{r=0}^{k-1}
\frac{M-j+s-r}{M-r}=
1+\frac{k(s-j)}{M}
+O(M^{-2}).
\end{aligned}
\]
Moreover,
\[
\begin{aligned}
\frac{
(M-k+1)_{s-1}
}{
(M+1)_s
}
&=
\frac1M
\frac{
\displaystyle
\prod_{r=1}^{s-1}
\left(1+\frac{-k+r}{M}\right)
}{
\displaystyle
\prod_{r=1}^{s}
\left(1+\frac{r}{M}\right)
}=
\frac1M
\left[
1+
\frac{k-(k+1)s}{M}
+O(M^{-2})
\right].
\end{aligned}
\]
Therefore,
\begin{equation}
\label{eq:F-s-asymptotic}
F_s(M)
=
\frac1M
+
\frac{k(1-j)-s}{M^2}
+
O(M^{-3}).
\end{equation}

Since $k$ and $j$ are fixed and $1\leq s\leq k$,
the index $s$ ranges over a finite set. Hence the remainder term in \eqref{eq:F-s-asymptotic} may be chosen uniformly in $s$. More
precisely, there exists a constant $C=C(k,j)>0$ such that
\[
\left|
F_s(M)
-
\frac1M
\left(
1+\frac{k(1-j)-s}{M}
\right)
\right|
\leq
\frac{C}{M^3},
\qquad
1\leq s\leq k,
\]
for all sufficiently large $M$.

It remains to evaluate two finite sums involving $C_s$. Put $m=k-1, x=j+k-2$. With $r=s-1$, the standard backward-difference identity
$\nabla^m\binom{x}{m}=1$
gives
\[
\sum_{r=0}^{m}
(-1)^r
\binom{m}{r}
\binom{x-r}{m}
=1.
\]
Hence
\begin{equation}
\label{eq:C-s-sum}
\sum_{s=1}^{k}C_s=-k.
\end{equation}

Next, using
$r\binom{m}{r}
=
m\binom{m-1}{r-1}$, we obtain
\[
\begin{aligned}
\sum_{r=0}^{m}
r(-1)^r
\binom{m}{r}
\binom{x-r}{m}
&=
-m
\sum_{t=0}^{m-1}
(-1)^t
\binom{m-1}{t}
\binom{x-1-t}{m}=
-m(x-m).
\end{aligned}
\]
Since $x-m=j-1$, it follows that
\[
\sum_{r=0}^{m}
(r+1)(-1)^r
\binom{m}{r}
\binom{x-r}{m}
=
k-(k-1)j.
\]
Consequently,
\begin{equation}
\label{eq:sC-s-sum}
\sum_{s=1}^{k}sC_s
=
k\bigl[(k-1)j-k\bigr].
\end{equation}

Using the uniform expansion \eqref{eq:F-s-asymptotic}, we have
\[
\begin{aligned}
\alpha_{n,j}^{(k)}
&=
\sum_{s=1}^{k}C_sF_s(M)=
\sum_{s=1}^{k}
C_s
\left[
\frac1M
+
\frac{k(1-j)-s}{M^2}
+
O(M^{-3})
\right].
\end{aligned}
\]
Since the sum contains only finitely many terms and the remainder in
\eqref{eq:F-s-asymptotic} is uniform in $s$, we have
\[
\sum_{s=1}^{k}C_s\,O(M^{-3})
=
O(M^{-3}).
\]
Therefore,
\[
\begin{aligned}
\alpha_{n,j}^{(k)}
&=
\frac1M
\sum_{s=1}^{k}C_s
+
\frac1{M^2}
\sum_{s=1}^{k}
C_s\bigl[k(1-j)-s\bigr]
+
O(M^{-3}).
\end{aligned}
\]
Using \eqref{eq:C-s-sum} and
\eqref{eq:sC-s-sum}, we have
\[
\begin{aligned}
\sum_{s=1}^{k}
C_s\bigl[k(1-j)-s\bigr]
&=
k(1-j)\sum_{s=1}^{k}C_s
-
\sum_{s=1}^{k}sC_s=
-k^2(1-j)
-
k\bigl[(k-1)j-k\bigr]=
kj.
\end{aligned}
\]
Consequently,
\[
\alpha_{n,j}^{(k)}
=
-\frac{k}{M}
+
\frac{kj}{M^2}
+
O(M^{-3}).
\]
Since $M=n+k$, we obtain
\[
\alpha_{n,j}^{(k)}
=
-\frac{k}{n+k}
+
\frac{kj}{(n+k)^2}
+
O(n^{-3}),
\qquad n\to\infty,
\]
which proves \eqref{eq:alpha-asymptotic}.


Squaring gives
\[
|\alpha_{n,j}^{(k)}|^2
=
\frac{k^2}{(n+k)^2}
-
\frac{2k^2j}{(n+k)^3}
+
O(n^{-4}),
\]
and replacing $j$ by $j+1$ and subtracting yields
\eqref{eq:alpha-square-difference}.
\end{proof}

We can now determine the first two invariants explicitly and prove
strict monotonicity of the complete sequence.

\begin{theorem}
\label{thm:strict-monotonicity}
Let $M_k=[(z-w)^k], k\geq1$. Then
\begin{equation}
\label{eq:Sigma0}
\Sigma_0(M_k)
=
k^2
\sum_{m=k}^{\infty}\frac1{m^2}
=
k^2
\left(
\frac{\pi^2}{6}
-
\sum_{m=1}^{k-1}\frac1{m^2}
\right),
\end{equation}
and
\begin{equation*}
\label{eq:Sigma1}
\Sigma_1(M_k)
=
k^2
\sum_{m=k+1}^{\infty}\frac1{m^2}
=
\Sigma_0(M_k)-1.
\end{equation*}
Moreover,
\begin{equation}
\label{eq:strict-monotonicity}
\Sigma_0(M_k)
>
\Sigma_1(M_k)
>
\Sigma_2(M_k)
>
\Sigma_3(M_k)
>\cdots .
\end{equation}
In particular, Yang's monotonicity conjecture holds for the entire
family $\{[(z-w)^k]:k\geq1\}$,
and the sequence is in fact strictly decreasing.
\end{theorem}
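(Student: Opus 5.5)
The proof splits into three parts: the explicit values of $\Sigma_0$ and $\Sigma_1$, the inequality $\Sigma_0>\Sigma_1$, and the strict inequalities $\Sigma_j>\Sigma_{j+1}$ for $j\geq1$.

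\emph{The values of $\Sigma_0$ and $\Sigma_1$.} By Lemma~\ref{lem:basic-inner-products}, $\langle\phi_n,\psi_n\rangle=A^n_{0,n}/D_n$. Formula \eqref{eq:opposite-corner-cofactors} gives
\[
\langle\phi_n,\psi_n\rangle=\frac{k}{n+k}.
\]
This is also correct at $n=0$, where the value is $1$. Hence
\[
\Sigma_0(M_k)=\sum_{n\ge0}\frac{k^2}{(n+k)^2}=k^2\sum_{m\ge k}\frac1{m^2},
\]
and the second expression in \eqref{eq:Sigma0} follows from $\sum_{m\geq1}m^{-2}=\pi^2/6$. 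The identity $\Sigma_1(M_k)=\sum_{n\ge1}|\langle\phi_n,\psi_n\rangle|^2$ from Section~\ref{sec2} then gives
\[
\Sigma_1(M_k)=k^2\sum_{m\ge k+1}\frac1{m^2}=\Sigma_0(M_k)-1.
\]
Both series converge, so $\Sigma_0>\Sigma_1$ is immediate.

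\emph{Reduction for $j\geq1$.} Fix $j\ge1$ and put
\[
\delta_N=\sum_{n=0}^N\Bigl(|\alpha_{n,j}^{(k)}|^2-|\alpha_{n,j+1}^{(k)}|^2\Bigr).
\]
Both $\Sigma_j$ and $\Sigma_{j+1}$ converge. Indeed, Lemma~\ref{lem:alpha-asymptotic} gives $|\alpha_{n,j}^{(k)}|^2=O(n^{-2})$, and convergence also follows from the trace-class/Hilbert--Schmidt framework. Therefore $\Sigma_j(M_k)-\Sigma_{j+1}(M_k)=\lim_{N\to\infty}\delta_N$.

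\emph{Strictness in the limit.} Theorem~\ref{thm:finite-section} gives $\delta_N>0$ for every $N\ge\max\{0,j-k\}$. This alone only yields $\lim\delta_N\ge0$, so the real issue is preserving strictness. By \eqref{eq:alpha-square-difference} there is $N_0\geq\max\{0,j-k\}$ such that the individual terms $|\alpha_{n,j}^{(k)}|^2-|\alpha_{n,j+1}^{(k)}|^2$ are strictly positive for all $n>N_0$. For $N>N_0$ we then have $\delta_N=\delta_{N_0}+\sum_{n=N_0+1}^N(\cdots)\ge\delta_{N_0}$. Passing to the limit,
\[
\Sigma_j(M_k)-\Sigma_{j+1}(M_k)\ge\delta_{N_0}>0.
\]
Combined with $\Sigma_0>\Sigma_1$, this proves the full chain \eqref{eq:strict-monotonicity}.

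The main obstacle is exactly this strictness step, since finite-section positivity by itself does not survive the limit. The tail-positivity coming from the asymptotic lemma closes that gap. Only the existence of $N_0$ is needed; no explicit bound on it is required.
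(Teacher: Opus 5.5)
Your proposal is correct and follows essentially the same route as the paper: the corner-cofactor formula gives $\Sigma_0$, and strictness for $j\geq1$ comes from combining Theorem~\ref{thm:finite-section} at some $N_0\geq\max\{0,j-k\}$ with the eventual termwise positivity from \eqref{eq:alpha-square-difference}. The only cosmetic difference is in how $\Sigma_1$ is obtained: you use the shift relation in Lemma~\ref{lem:basic-inner-products}, whereas the paper reads off $\alpha_{n,1}^{(k)}=-\frac{k}{n+k+1}$ from the $s=1$ term of \eqref{eq:alpha-unified-sum}, and the two computations agree.
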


\begin{proof}
By the general formula for $\Sigma_0$ and \eqref{eq:opposite-corner-cofactors}, we have $A^n_{0,n}=
\frac{k}{n+k}D_n$.
Therefore
\[
\begin{aligned}
\Sigma_0(M_k)
&=
\sum_{n=0}^{\infty}
\left|
\frac{A^n_{0,n}}{D_n}
\right|^2=
\sum_{n=0}^{\infty}
\frac{k^2}{(n+k)^2}=
k^2
\sum_{m=k}^{\infty}\frac1{m^2},
\end{aligned}
\]
which proves \eqref{eq:Sigma0}.

For $j=1$, only $s=1$ contributes in \eqref{eq:alpha-unified-sum}.
Hence, we have $\alpha_{n,1}^{(k)}=
-\frac{k}{n+k+1}$.
Therefore,
\[
\Sigma_1(M_k)
=
k^2\sum_{m=k+1}^{\infty}\frac1{m^2}
=
\Sigma_0(M_k)-1.
\]

Consequently, we have $\Sigma_0(M_k)>\Sigma_1(M_k)$.


Now fix $j\geq1$. By
Lemma~\ref{lem:alpha-asymptotic}, we have $|\alpha_{n,j}^{(k)}|^2
=O(n^{-2})$, so both $\Sigma_j(M_k)$ and $\Sigma_{j+1}(M_k)$ converge.

Define the finite-section difference
\[
\Delta_{j,N}^{(k)}
=
\sum_{n=0}^{N}
\left(
|\alpha_{n,j}^{(k)}|^2
-
|\alpha_{n,j+1}^{(k)}|^2
\right).
\]
By Theorem~\ref{thm:finite-section}, $\Delta_{j,N}^{(k)}>0$
whenever $N\geq\max\{0,j-k\}$.

On the other hand,
\eqref{eq:alpha-square-difference} implies that there exists
$N_0=N_0(k,j)$ such that
$|\alpha_{n,j}^{(k)}|^2
-|\alpha_{n,j+1}^{(k)}|^2
>0$
for every $n\geq N_0$.
Increasing $N_0$ if necessary, assume also that
$N_0\geq\max\{0,j-k\}$.
Then $\Delta_{j,N_0}^{(k)}>0$,
and, for every $N\geq N_0$, we have
\[
\begin{aligned}
\Delta_{j,N+1}^{(k)}
-
\Delta_{j,N}^{(k)}
&=
|\alpha_{N+1,j}^{(k)}|^2
-
|\alpha_{N+1,j+1}^{(k)}|^2>0.
\end{aligned}
\]
Hence the sequence
$\{\Delta_{j,N}^{(k)}\}_{N\geq N_0}$
is strictly increasing and positive. Therefore
\[
\begin{aligned}
\Sigma_j(M_k)-\Sigma_{j+1}(M_k)
&=
\lim_{N\to\infty}
\Delta_{j,N}^{(k)}
\geq
\Delta_{j,N_0}^{(k)}>0.
\end{aligned}
\]
Thus
\[
\Sigma_j(M_k)>\Sigma_{j+1}(M_k)
\qquad
(j\geq1).
\]
Together with $\Sigma_0(M_k)-\Sigma_1(M_k)=1$, this proves
\eqref{eq:strict-monotonicity}.
\end{proof}

\begin{corollary}
\label{cor:core-multiplicity}
Every nonzero eigenvalue of $C_{M_k}$ is simple.
\end{corollary}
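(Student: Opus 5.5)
The plan is a trace (Hilbert--Schmidt norm) counting argument. By Theorem~\ref{thm:core-spectrum}, every nonzero eigenvalue of $C_{M_k}$ is either $1$ or one of the numbers $\pm\frac{k}{n+k}$ with $n\geq1$. The eigenvalue $1$ was already shown there to be simple, since $E_1(C_{M_k})=\mathbb Cp_k$. So only the eigenvalues $\pm\frac{k}{n+k}$ remain.

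The first step is to note that these numbers are pairwise distinct. The map $n\mapsto \frac{k}{n+k}$ is strictly decreasing on $n\geq1$, its values lie in $(0,\frac{k}{k+1}]$, and they are therefore different from $1$ and from every negative value. Hence, if $\mu_n^{\pm}\geq1$ denotes the multiplicity of $\pm\frac{k}{n+k}$, the nonzero spectrum with multiplicities is
\[
\{1\}\cup\Bigl\{\pm\tfrac{k}{n+k}\text{ with multiplicity }\mu_n^{\pm}:n\geq1\Bigr\}.
\]
Each $\mu_n^{\pm}$ is at least $1$ because Theorem~\ref{thm:core-spectrum} (via \cite[Theorem~3.7]{Fat}) lists each $\pm\frac{k}{n+k}$ as an eigenvalue.

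Next, since $C_{M_k}$ is compact and self-adjoint, the spectral theorem gives
\[
\operatorname{tr}(C_{M_k}^2)=1+\sum_{n\geq1}\bigl(\mu_n^{+}+\mu_n^{-}\bigr)\frac{k^2}{(n+k)^2}.
\]
On the other hand, Section~\ref{sec2} gives $\operatorname{tr}(C_{M_k}^2)=\Sigma_0(M_k)+\Sigma_1(M_k)$. By Theorem~\ref{thm:strict-monotonicity},
\[
\Sigma_0(M_k)+\Sigma_1(M_k)=k^2\sum_{m\geq k}\frac1{m^2}+k^2\sum_{m\geq k+1}\frac1{m^2}
=1+2k^2\sum_{n\geq1}\frac{1}{(n+k)^2}.
\]
Comparing the two expressions yields
\[
\sum_{n\geq1}\bigl(\mu_n^{+}+\mu_n^{-}-2\bigr)\frac{k^2}{(n+k)^2}=0.
\]
Every summand is nonnegative, because each $\mu_n^{\pm}\geq1$. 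Hence every summand vanishes, which forces $\mu_n^{+}=\mu_n^{-}=1$ for all $n\geq1$.

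The only delicate point is the lower bound $\mu_n^{\pm}\geq1$ together with the fact that no other nonzero eigenvalues occur. I will justify both by citing the structure in \cite[Theorem~3.7]{Fat}: for a homogeneous principal submodule, $C_M$ reduces along the homogeneous grading, and each degree contributes exactly the pair of eigenvalues given by the formula used in the proof of Theorem~\ref{thm:core-spectrum}. Once this is granted, the counting above is routine, and the argument needs no further computation beyond the already established values of $\Sigma_0$ and $\Sigma_1$.
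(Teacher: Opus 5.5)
Your proposal is correct and is essentially the paper's own argument: both compare $\|C_{M_k}\|_{\mathrm{H.S.}}^2=\Sigma_0(M_k)+\Sigma_1(M_k)=1+2k^2\sum_{m\geq k+1}m^{-2}$ with the sum of the squared eigenvalues from Theorem~\ref{thm:core-spectrum}, each counted once, so that no multiplicity can exceed one. Your explicit check that the values $\pm\frac{k}{n+k}$ are pairwise distinct and different from $1$ makes precise a point the paper leaves implicit. The trace identity itself already rules out any further nonzero eigenvalues, so you only need that the listed values actually occur as eigenvalues.
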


\begin{proof}
By Theorem~\ref{thm:strict-monotonicity}, $\Sigma_1(M_k)=\Sigma_0(M_k)-1$, and therefore
\[
\begin{aligned}
\|C_{M_k}\|_{\mathrm{H.S.}}^2
&=
\Sigma_0(M_k)+\Sigma_1(M_k)=
2\Sigma_0(M_k)-1=
2k^2\sum_{m=k}^{\infty}\frac1{m^2}-1=
1+
2k^2\sum_{m=k+1}^{\infty}\frac1{m^2}.
\end{aligned}
\]
On the other hand, Theorem~\ref{thm:core-spectrum} shows that
\[
1,\qquad
\pm\frac{k}{n+k},\quad n\geq1,
\]
are all the nonzero eigenvalues of $C_{M_k}$. Their squared values,
counted once each, satisfy
\[
\begin{aligned}
1+
2\sum_{n=1}^{\infty}
\left(\frac{k}{n+k}\right)^2
&=
1+
2k^2\sum_{m=k+1}^{\infty}\frac1{m^2}=
\|C_{M_k}\|_{\mathrm{H.S.}}^2.
\end{aligned}
\]
Since a compact self-adjoint Hilbert--Schmidt operator satisfies
\[
\|C_{M_k}\|_{\mathrm{H.S.}}^2
=
\sum_{\lambda\in\sigma(C_{M_k})}
m(\lambda)|\lambda|^2,
\]
where $m(\lambda)$ denotes the multiplicity of $\lambda$, the equality
above leaves no room for any additional multiplicity. Hence every
nonzero eigenvalue of $C_{M_k}$ is simple.
\end{proof}

\begin{remark}
\label{rem:finite-section-stronger}
The proof of Theorem \ref{thm:strict-monotonicity} actually gives a result stronger than the monotonicity of the
numerical invariant sequence. Namely, for every $j\geq1$,
\[
\sum_{n=0}^{N}
|\alpha_{n,j}^{(k)}|^2
>
\sum_{n=0}^{N}
|\alpha_{n,j+1}^{(k)}|^2
\]
for every $N\geq\max\{0,j-k\}$.
Thus the strict decrease is already present at every admissible finite
graded section; it is not produced merely by cancellation in the
infinite series.
\end{remark}

\begin{remark}
\label{rem:comparison-zk-wk}
The mechanism in Theorem \ref{thm:strict-monotonicity} is fundamentally different from that for the
submodules $[z^k-w^k]$ \cite{Liu2}. For $[z^k-w^k]$, the associated Toeplitz
matrices split into independent residue classes modulo $k$, and this
produces constant blocks in the numerical invariant sequence. For
$[(z-w)^k]$, no such residue-class decomposition occurs. Instead, the
adjacent numerical coefficients are connected by the first-order
relation \eqref{eq:adjacent-alpha}, and the corresponding finite
coefficient vectors are related by strict contractions. This produces
strict decrease rather than block repetition.
\end{remark}

\medskip

\subsection*{Acknowledgment}
Y. Liu was supported by the Natural Science Foundation Project in Henan Province (No. 262300421861), the funding program for young backbone teachers in higher education institutions in Henan Province (No. 2024GGJS106), the key research projects of higher education institutions in Henan Province (No. 25B110009) and the general project cultivation fund of Nanyang Normal University (No. 2025PY034), Y. Lu was supported by NNSFC (Grant No. 12031002), C. Zu was supported by NNSFC (Grant No. 12401151), and the Postdoctoral Researcher Foundation of China  (Grant No. GZB20240100).

%
\subsection*{Conflict of interest}
The authors declare that they have no conflict
of interest. 
\subsection*{Data availability statement}
No data, models, or code were generated or used for the research described in the article.

\end{document}